\DeclareMathOperator{\ex}{ex}
\def\I{\mathcal{I}}
\def\P{\mathcal{P}}
\def\R{\mathcal{R}}
\def\U{\mathcal{U}}
\def\V{\mathcal{V}}
\def\cV{\mathcal{V}}
\def\W{\mathcal{W}}
\newcommand{\hm}[1]{\leavevmode{\marginpar{\tiny%
$\hbox to 0mm{\hspace*{-0.5mm}$\leftarrow$\hss}%
\vcenter{\vrule depth 0.1mm height 0.1mm width \the\marginparwidth}%
\hbox to 0mm{\hss$\rightarrow$\hspace*{-0.5mm}}$\\\relax\raggedright #1}}}
\newtheorem{theorem}{Theorem}[section]
\newtheorem{lemma}[theorem]{Lemma}
\newtheorem{claim}[theorem]{Claim}
\newtheorem{conjecture}[theorem]{Conjecture}
\newtheorem{definition}[theorem]{Definition}
\theoremstyle{definition}
\newcommand\eps{\varepsilon}
\def\({\left(}
\def\){\right)}
\def\L{\mathcal{L}}
\DeclareMathOperator{\Fano}{Fano}
\definecolor{red}{rgb}{1, 0, 0}
\def\epsilon{\varepsilon}
\begin{document}
\overfullrule=5pt

\title{The rainbow Erd\H{o}s-Rothschild problem for the Fano plane}

\author[L. Contiero]{Lucas de Oliveira Contiero}
\address{Instituto de Matem\'atica e Estat\'{i}stica, UFRGS -- Avenida Bento Gon\c{c}alves, 9500, 91501--970 Porto Alegre, RS, Brazil}
\email{lucas.contiero@ufrgs.br}

\author[C. Hoppen]{Carlos Hoppen}
\address{Instituto de Matem\'atica e Estat\'{i}stica, UFRGS -- Avenida Bento Gon\c{c}alves, 9500, 91501--970 Porto Alegre, RS, Brazil}
\email{choppen@ufrgs.br}

\author[H. Lefmann]{Hanno Lefmann}
\address{Fakult\"at f\"ur Informatik, Technische Universit\"at Chemnitz,
Stra\ss{}e der Nationen 62, D-09107 Chemnitz, Germany}
\email{Lefmann@Informatik.TU-Chemnitz.de}

\author[K. Odermann]{Knut Odermann}
\address{Fakult\"at f\"ur Informatik, Technische Universit\"at Chemnitz,
Stra\ss{}e der Nationen 62, D-09107 Chemnitz, Germany}
\email{knut.odermann@Informatik.TU-Chemnitz.de}

\thanks{This work was partially supported by CAPES and DAAD via Probral (CAPES Proc.~88881.143993/2017-01 and DAAD~57391132 and 57518130). The first author was supported by CAPES. The second author acknowledges the support of CNPq (Proc.~308054/2018-0).}


\begin{abstract}
The Fano plane is the unique linear 3-uniform hypergraph on seven vertices and seven hyperedges. It was recently proved that, for all $n \geq 8$, the balanced complete bipartite 3-uniform hypergraph on $n$ vertices, denoted by $B_n$, is the 3-uniform hypergraph on $n$ vertices with the largest number of hyperedges that does not contain a copy of the Fano plane. For sufficiently large $r$ and $n$, we show that $B_n$ admits the largest number of $r$-edge colorings with no rainbow copy of the Fano plane.
 \end{abstract}

\maketitle

\section{Introduction} 

This paper contributes to a line of research about coloring problems on combinatorial structures that originated from a graph-theoretical question of Erd\H{o}s and Rothschild~\cite{Erd74}. Their question was motivated by the Tur\'{a}n problem~\cite{turan}. 

As usual, for a fixed graph $F$, we say that a graph $G$ is \emph{$F$-free} if it does not contain $F$ as a subgraph. The \emph{Tur\'{a}n problem} associated with $F$ asks us to find the maximum number of edges among all $F$-free $n$-vertex graphs, which is denoted $\ex(n,F)$, and to determine the $F$-free $n$-vertex graphs $G$ with this number of edges, known as the \emph{$F$-extremal} graphs. 

Here we consider a related problem, which deals with $r$-colorings of the edge set of $G$ that do not contain a copy of $F$ colored \emph{according to a fixed pattern}. An \emph{$r$-edge-coloring} (or simply \emph{$r$-coloring}) $\Delta$ of a graph $G=(V,E)$ is a function $\Delta \colon E \rightarrow [r]$, where $[r]=\{1,\ldots,r\}$, and an \emph{$r$-pattern} $P$ of a graph $F$ is a partition of the edge set of $F$ into at most $r$ classes. An $r$-coloring of $G$ is said to be \emph{$(F,P)$-free} if it does not contain a copy of $F$ in which the partition of the edge set induced by the coloring is isomorphic to $P$. Given a graph $G$, one may consider the number $c_{r,(F,P)}(G)$ of $(F,P)$-free $r$-colorings of $G$ and define $c_{r,(F,P)}(n)$ as the maximum of this quantity over all $n$-vertex graphs. An $n$-vertex graph $G$ for which $c_{r,(F,P)}(G)=c_{r,(F,P)}(n)$ is said to be $(r,F,P)$-extremal.

The original Erd\H{o}s-Rothschild question concerned the instance where $F$ is a complete graph $K_\ell$ and the pattern $P$ consists of a single class, i.e., Erd\H{o}s and Rothschild were interested in edge-colorings of $G$ avoiding \emph{monochromatic} copies of $K_\ell$. Later, Pikhurko, Staden and Yilma~\cite{PSY2017} proposed a generalization of the original monochromatic problem, while Balogh~\cite{balogh} and two of the current authors~\cite{forbmx} studied problems leading to the more general version considered here. A non-monochromatic pattern that attracted considerable attention for various graphs $F$ is the \emph{rainbow} pattern, namely the pattern where all partition classes are singletons.

Since the late 90's, several researchers have obtained substantial progress both for the monochromatic case~\cite{ABKS04,han,PY12,Yuster} and for other patterns~\cite{BalLi18?,BBMS18?,gallai,linear,rainbow_triangle,rainbow_complete,genrainbow}. Typically, the sets of $F$-extremal graphs and $(r,F,P)$-extremal graphs coincide if $P$ is monochromatic and $r \leq 3$ or if $P$ is rainbow and $r$ is sufficiently large. Versions of the Erd\H{o}s-Rothschild problem have also been studied in the monochromatic setting for set systems~\cite{CDT2018,kneser}, linear spaces~\cite{CDT2018,linear_spaces}, partial orders~\cite{DGST} and sum-free sets~\cite{HJ2018}, for instance.

In this paper, we consider this problem for $k$-uniform hypergraphs, that is, for pairs $H = (V, E)$ where $V$ is a finite set called the \emph{vertex set} of $H$ and $E \subseteq \{e \colon e \subseteq V, |e|=k\}$ is called the \emph{edge set} of $H$. For a fixed $k$-uniform hypergraph $F$, the concepts of \emph{$F$-free hypergraph} and of \emph{$F$-extremal} 
$k$-uniform hypergraph with $n$ vertices may be defined as in the graph case. The same holds for the Tur\'{a}n number $\ex(n,F)$. For a positive integer $r$, an \emph{$r$-coloring} of a hypergraph $H = (V, E)$ is again a function associating  each hyperedge in $E$ with a color in $[r]$ and a \emph{color pattern} $P$ of a hypergraph $F$ is a partition of its edge set. An $r$-coloring of a hypergraph $H$ is said to be \emph{$(F, P)$-free} if there is no copy of $F$ in $H$ such that the partition of $E(F)$ induced by the $r$-coloring of $H$ is isomorphic to $P$. For fixed integers $n$, $r$ and $k$, if $\mathcal{H}_{n,k}$ denotes the set of $n$-vertex $k$-uniform hypergraphs, $F$ is a fixed $k$-uniform hypergraph and $P$ is a pattern of $F$, let $c_{r,(F,P)}(H)$ be the number of $(F,P)$-free $r$-colorings of $H$. The aim is to determine the function
\begin{equation}\label{eq_cr}
c_{r,(F,P)}(n)=\max\{c_{r,(F,P)}(H) \colon H \in \mathcal{H}_{n,k}\}
\end{equation}
and the hypergraphs $H \in \mathcal{H}_{n,k}$ that achieve equality in~\eqref{eq_cr}. The hypergraphs satisfying this are said to be \emph{$(r,F,P)$-extremal}. When $P$ is the monochromatic pattern, exact results about this problem have been obtained, for sufficiently large $n$, when $F$ lies in some class of expanded graphs~\cite{LP11} and when $F$ is the Fano plane~\cite{LPRS}. The \emph{Fano plane} is the unique linear $3$-uniform hypergraph on seven vertices and with seven hyperedges. Results for the monochromatic pattern in more general hypergraphs may be found in~\cite{LPS11}.

A proof method that has been particularly useful for the monochromatic pattern in instances where the $F$-extremal configuration is dense (that is, where $\ex(n,F)=\Omega(n^k)$) was developed by Alon, Balogh, Keevash and Sudakov~\cite{ABKS04}. Their approach consists of two steps:  (i) Prove a stability result establishing that any counterexample $H$ to the desired result would be similar to the actual $n$-vertex $F$-extremal structure. (ii) Assuming that there is an $n$-vertex graph other than the $F$-extremal graph with at least as many $r$-colorings as the extremal graph, prove that there is a sub-hypergraph whose number of colorings creates a `gap' to the number of colorings of the $F$-extremal configuration that is even larger. A recursive application of this step leads to a counterexample whose number of colorings is too large to be feasible.  

For instance, the results in~\cite{LPRS} are an adaptation of this approach to the case when $F$ is the Fano plane, $P$ is the monochromatic pattern and $r \in \{2,3\}$. As a second example, the results in~\cite{LPS11} give step (i) for arbitrary $F$, where $P$ is again the monochromatic pattern and $r \in \{2,3\}$. This is used in~\cite{LP11} to derive the exact result (namely step (ii)) when $F$ is an expanded complete graph or a Fan hypergraph.

We show that, for the rainbow pattern $P$ of the $3$-uniform Fano plane and sufficiently large $r$ and $n$, the \emph{stability} given in (i) holds. We then use the strategy in (ii) to prove that this stability implies that the $\Fano$-extremal and the $(r,\Fano,P)$-extremal $n$-vertex configurations coincide for large $n$.

 We finish the introduction with the precise statement of our main result. In the remainder of this paper, we shall write $F^R$ for the rainbow pattern of the Fano plane. In particular, we shall refer to $F^R$-free $r$-colorings, to $(r,F^R)$-extremal hypergraphs and to the function $c_{r,F^R}(n)$ instead of $(\Fano,P)$-free $r$-colorings, $(r,\Fano,P)$-extremal hypergraphs and $c_{r,(\Fano,P)}(n)$, respectively.
 
 Given a positive integer $n$, let $B_n$ be the $n$-vertex 3-uniform hypergraph defined as follows. There is a partition $V(B_n) = V_1\cup V_2$ of the vertex set of $B_n$, with $||V_1|-|V_2||\leq 1$, such that $E(B_n)$ consists of all triples having non-empty intersection with both classes. F\"uredi and Simonovits~\cite{FSstability} and, independently,  Keevash and Sudakov~\cite{KSstability} proved that $B_n$ is the unique $F$-extremal hypergraph for all $n$ sufficiently large. Recently, Bellmann and Reiher~\cite{BRstability} proved that this holds for every $n \geq 8$.

We prove that $B_n$ is $(r,F^R)$-extremal if $r$ and $n$ are sufficiently large.
\begin{theorem}\label{thm:main:fano} 
The following holds for every 
$$r \geq r_0= 6^{492^{64}\cdot (37\cdot 16^3\cdot 1406^9)^{63}}. 
$$
 There exists $n_0=n_0(r)$ such that for any $n \geq n_0$ and any 3-uniform $n$-vertex hypergraph $H$, the inequality
\begin{equation}\label{eq_fano}
c_{r,F^R}(H) \leq r^{\ex(n,\Fano)}
\end{equation}
holds. Moreover, for $n \geq n_0$, equality holds in~\eqref{eq_fano} if and only if $H$ is isomorphic to $B_n$.
\end{theorem}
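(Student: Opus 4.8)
The plan is to follow the two-step Alon--Balogh--Keevash--Sudakov strategy, specialized to the rainbow pattern of the Fano plane. Throughout, fix $r$ large as in the hypothesis and $n$ large compared to $r$. First I would establish the lower bound $c_{r,F^R}(B_n)\ge r^{\ex(n,\Fano)}$, which is immediate: since $B_n$ is $\Fano$-free by the F\"uredi--Simonovits/Keevash--Sudakov/Bellmann--Reiher result, \emph{every} one of the $r^{|E(B_n)|}=r^{\ex(n,\Fano)}$ colorings of $B_n$ is trivially $F^R$-free, so the content of the theorem is the matching upper bound together with uniqueness. So suppose, for contradiction, that some $n$-vertex hypergraph $H$ has $c_{r,F^R}(H)\ge r^{\ex(n,\Fano)}$, with $H\not\cong B_n$ if we are in the uniqueness case.

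Next comes step (i), the \emph{stability} phase. I would show that any such $H$ must have $|E(H)|=\ex(n,\Fano)-o(n^3)$ and, moreover, must be $o(n^3)$-close in edit distance to $B_n$. The mechanism is the standard counting/container-type argument: each $F^R$-free coloring of $H$ restricts, via a Ramsey-type or hypergraph-container reduction, to a small family of ``$\Fano$-free-ish'' sub-hypergraphs on which the coloring is essentially unconstrained, so $c_{r,F^R}(H)$ is bounded above by (number of such sub-hypergraphs)$\cdot r^{\max\text{such subhypergraph size}}$. If $H$ were not close to $B_n$, the supersaturation phenomenon for the Fano plane (many copies of $\Fano$ once one exceeds $\ex(n,\Fano)-\eps n^3$ edges, and structural instability away from the bipartite configuration) forces either too few edges to be colorable freely or too large a ``deletion'' family, and in either case $c_{r,F^R}(H) < r^{\ex(n,\Fano)}$ for $n$ large. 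This is where the enormous bound on $r_0$ in the statement is consumed: the colors must be numerous enough that the Ramsey/container overhead is dominated by the $r^{(\text{edges})}$ term. I would lean here on the analogue of what \cite{LPS11} provides for the monochromatic pattern, adapted to rainbow patterns, combined with the exact stability of \cite{BRstability}.

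Then comes step (ii), the \emph{bootstrapping} phase, which I expect to be the main obstacle. Having reduced to the case that $H$ is close to $B_n$, I would fix the near-bipartition $V(H)=V_1\cup V_2$ and analyze the ``defect'' edges: edges of $H$ inside a class, and non-edges of $H$ that cross. The key inequality to prove is that each such defect strictly decreases the number of $F^R$-free colorings relative to $B_n$, i.e.\ that adding an edge inside $V_i$ or deleting a crossing edge costs a factor bounded away from $1$ in the count of $F^R$-free colorings. The subtlety, exactly as in the monochromatic Fano work, is that a single defect edge need not lie in many copies of the Fano plane within the sparse structure $H$, so one cannot argue locally; instead one sets up a recursive/entropy argument: if $H$ has a defect, one finds a sub-hypergraph $H'$ (obtained by a suitable vertex deletion or edge switch toward $B_n$) whose coloring count has an even larger multiplicative gap over its own ``$B$-type'' comparison, and iterating drives the count below $r^{\ex(n,\Fano)}$, a contradiction. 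Carrying this out requires a careful quantitative lemma counting $F^R$-free extensions of a partial coloring across one crossing edge versus one internal edge, using that the Fano plane is linear and $3$-partite-forbidding in a precise sense; getting the constants to close the recursion (again eating into the tower in $r_0$) is the delicate part. Once no defects remain, $H$ is exactly $B_n$ up to isomorphism, which gives both the upper bound $c_{r,F^R}(H)\le r^{\ex(n,\Fano)}$ and the uniqueness clause.

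Finally I would assemble the pieces: the stability phase confines any near-extremal $H$ to a small neighborhood of $B_n$; the bootstrapping phase shows that within that neighborhood $B_n$ strictly dominates unless $H=B_n$; together with the trivial lower bound this yields \eqref{eq_fano} and the ``if and only if'' statement for all $n\ge n_0(r)$. The hardest and most technical component is the one-defect counting lemma underpinning step (ii), since the sparsity of $B_n$-like hypergraphs means the Fano copies witnessing the loss must be produced globally rather than locally, and the induction on defects must be set up so that the gaps compound rather than erode.
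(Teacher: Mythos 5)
Your proposal follows the same Alon--Balogh--Keevash--Sudakov two-phase strategy as the paper: first a colored stability lemma confining any near-extremal $H$ to a small edit-distance neighborhood of $B_n$, then a recursive descent (via vertex deletion) that creates a compounding multiplicative gap whenever $H\neq B_n$. The one substantive difference is that where you hedge between a container-type and a Ramsey/regularity-type reduction for the stability phase, the paper uses the colored weak hypergraph regularity lemma plus a quantitative form of the Keevash--Sudakov Fano stability theorem; the decisive observation there, which your sketch does not name, is that since the Fano plane has exactly seven hyperedges, any copy of it in the cluster hypergraph all of whose edges carry color-lists of size at least $7$ yields a rainbow Fano in $H$ by the embedding lemma, so the cluster edges with large lists form a $\Fano$-free hypergraph and hence number at most $\ex(m,\Fano)$. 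Likewise, your ``one-defect counting lemma'' for phase (ii) is instantiated in the paper not as an edge-local estimate but as an explicit construction of $\Theta(n^3)$ $3$-colored triples in the link graph of a bad vertex (or, in the complementary case, of $\Theta(n^2)$ edge-disjoint $K_4$'s in a common link), each forcing a constrained near-Fano; nevertheless you correctly identified that the argument must be global and recursive rather than local, which is the essential insight.
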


Our paper is structured as follows. In Section~\ref{sec:aux} we introduce definitions and auxiliary results that will be useful for proving our main theorem. In Section~\ref{sec:stability} we will derive a colored stability result, which is Theorem~\ref{thm:kee1}.  With this we prove an embedding result in Section~\ref{sec:embedding}, which implies Theorem~\ref{thm:main:fano}. We conclude the paper with final remarks and open problems.

\section{Notation and tools} \label{sec:aux}

In this section, we state auxiliary results that will be used in our proofs. In addition to fairly standard definitions and technical results, we shall consider a version of the regularity method for hypergraphs, known as the Weak Hypergraph Regularity Lemma, and embedding results related to it.

\subsection{Regularity Lemma}
A key tool in this paper is the so-called \textrm{weak hypergraph regularity lemma}. It is a natural extension of the Szemer\'{e}di Regularity Lemma \cite{Sregularity}. 

Let $H = (V, E)$ be a $k$-uniform hypergraph and let $W_1, \ldots, W_k$ be mutually disjoint non-empty subsets of $V$. Let $E(W_1, \ldots, W_k) = \{e\in E(H)\colon |e\cap W_i|=1 \;  \forall i\in[k]\}$ and consider the \emph{density}
$$d_H(W_1, \ldots, W_k) = \frac{|E(W_1, \ldots, W_k)|}{|W_1|\cdots|W_k|}$$
of $H$ with respect to the sets  $W_1, \ldots, W_k$. 

For fixed $\eps > 0$ and $d>0$, we say that $(V_1, \ldots, V_k)$ 
is \emph{$(\eps,d)$-regular},  if
$$
|d_H(W_1, \ldots, W_k)-d|\leq \eps$$
for all $k$-tuples $(W_1, \ldots , W_k)$ of 
subsets $W_i\subseteq V_i$, $i\in [k]$,  
with $\prod_{i=1}^k|W_i| \geq \eps 
\prod_{i=1}^k |V_i|$. Such an $k$-tuple $(V_1,\ldots,V_k)$ is said to be \emph{$\eps$-regular} if it is $(\eps,d)$-regular for some $d\geq 0$, and \emph{$\eps$-irregular} otherwise.

Finally, an equitable partition $\V=\{V_1,\ldots,V_t\}$\footnote{This is a partition that satisfies $||V_i|-|V_j||\leq 1$ for all $i,j \in [t]$.} of the vertex set of a $k$-uniform hypergraph $H=(V,E)$ is \emph{$\eps$-regular} if, for all but at most $\eps
 \binom{t}{k}$ distinct $k$-element subsets
    $\{i_1, \ldots, i_k \}\subseteq[t]$, the $k$-tuple
    $(V_{i_1}, \ldots, V_{i_k})$ is
  $\eps$-regular.

The following is a colored version of the Weak Regularity Lemma that will be used in this paper, which may be easily derived from the work in \cite{chung91,fr92,KNRSembedding,steger90}.
\begin{theorem}[Colored Regularity Lemma]\label{theoremcoloredregularity}
For all integers $r\geq 1$, $k\geq 2$ and $m_0\geq 1$, and every $\epsilon>0$, there exist $M_0 = M_0(r, k,m_0, \epsilon)$ and $N_0 = N_0(r,k,m_0, \epsilon)$ with the following property. Every $k$-uniform hypergraph $H = (V, E)$ on $n\geq N_0$ vertices whose set of hyperedges is $r$-colored $E(H) = E_1\cup\cdots\cup E_r$ admits an equitable partition $\V = \{V_1,\ldots, V_m\}$ of $V$ with $m_0\leq m\leq M_0$ that is simultaneously $\epsilon$-regular for all sub-hypergraphs $H_i = (V, E_i)$, where $i\in [r]$.
\end{theorem}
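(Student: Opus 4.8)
The plan is to derive the Colored Regularity Lemma from the (single-colored) Weak Hypergraph Regularity Lemma by a standard product refinement argument. First I would invoke the uncolored weak regularity lemma for $k$-uniform hypergraphs, in the form that says: for all $k\geq 2$, $m_0'\geq 1$ and $\epsilon'>0$ there exist $M_0'$ and $N_0'$ such that every $k$-uniform hypergraph on $n\geq N_0'$ vertices admits an equitable $\epsilon'$-regular partition into $m'$ parts with $m_0'\leq m'\leq M_0'$. This is exactly the hypergraph analogue of Szemer\'edi's lemma that follows from~\cite{chung91,fr92,KNRSembedding,steger90}. The colored version should then be obtained by applying this uncolored lemma once to \emph{each} of the $r$ hypergraphs $H_i=(V,E_i)$ and taking a common refinement.

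The cleanest route is the iterated-refinement (or simultaneous) version. I would run the following procedure: set auxiliary parameters and apply the uncolored lemma to $H_1$ with precision $\epsilon_1$ to get a partition $\V^{(1)}$; then apply it to $H_2$ \emph{restricted so as to refine $\V^{(1)}$} — i.e. use the standard fact that the weak regularity lemma can be made to produce a partition refining any prescribed initial partition with a bounded number of parts — obtaining $\V^{(2)}$ that refines $\V^{(1)}$ and is $\epsilon_2$-regular for $H_2$; continue through $H_r$. The key point is that refining a partition does not destroy $\epsilon$-regularity up to a controlled loss: if $(V_i,\ldots,V_{i_k})$ is $(\epsilon,d)$-regular and each $V_{i_j}$ is split into parts of size at least a $\delta$-fraction, then most $k$-tuples of the subparts remain $(\epsilon',d)$-regular with $\epsilon'=\epsilon'(\epsilon,\delta,k)$, and $\epsilon'\to 0$ as $\epsilon\to 0$ for fixed $\delta$. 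Choosing the $\epsilon_i$ small enough relative to the (bounded) number of parts produced by the earlier steps, the final partition $\V=\V^{(r)}$ is simultaneously $\epsilon$-regular for all $H_1,\dots,H_r$ with the desired $\epsilon$; its number of parts $m$ lies between $m_0$ and some bound $M_0=M_0(r,k,m_0,\epsilon)$ obtained by unwinding the nested applications, and $N_0$ is the maximum of the thresholds arising along the way. An equivalent and perhaps slicker formulation is to colour-code: replace $H$ by the single auxiliary structure on $V$ recording, for each $k$-set, its colour (or ``non-edge''), observe this is a bounded-size colouring, and apply a multicolour weak regularity lemma for edge-colored hypergraphs directly; either way the content is a product/refinement argument.

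I would also need to check the two cosmetic points that the statement asks for: the partition is \emph{equitable} (the iterated refinement can be made equitable by the usual trick of throwing leftover vertices into an exceptional part and then re-balancing, or by applying at each stage the equitable version of the uncolored lemma and using that a common refinement of equitable partitions can be equalized with negligible cost), and the lower bound $m\geq m_0$ on the number of parts (ensured by feeding $m_0$ as the lower bound into the first application). Finally, I would record that regularity of the $k$-tuple $(V_{i_1},\dots,V_{i_k})$ for the subhypergraph $H_i$ really does follow from regularity for $H_i$ alone, since the density $d_{H_i}$ only involves the colour class $E_i$; there is no interaction between colours, which is what makes the simultaneous statement possible with merely a bounded blow-up of the constants.

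The main obstacle — though it is a technical rather than conceptual one — is bookkeeping the quantitative dependence: each refinement step multiplies the number of parts by a factor that itself depends on all previously chosen precision parameters, so one must choose the sequence $\epsilon_1\gg\epsilon_2\gg\cdots\gg\epsilon_r$ (and the intermediate $m_0^{(i)}$) in the right order, ``from the outside in'', so that the regularity losses incurred by later refinements are absorbed and the output precision is the prescribed $\epsilon$. This is the same phenomenon as in the proof that one can make Szemer\'edi's lemma refine a given partition, iterated $r$ times; it produces a tower-type bound for $M_0$ in terms of $r$, $k$, $m_0$ and $1/\epsilon$, which is acceptable since the statement only asserts existence of \emph{some} such $M_0$. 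I would not carry out this arithmetic in detail, merely indicate that the constants can be chosen consistently and cite the uncolored lemma as the black box.
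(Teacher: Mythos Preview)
The paper does not actually prove this theorem: it simply states it and remarks that it ``may be easily derived from the work in~\cite{chung91,fr92,KNRSembedding,steger90}.'' Your iterated-refinement sketch is exactly the standard derivation one has in mind here --- apply the uncolored weak hypergraph regularity lemma successively to $H_1,\dots,H_r$, each time refining the previous partition, with precision parameters chosen from the inside out so that the regularity survives the later refinements --- and it is correct. There is nothing to compare against in the paper itself; your proposal supplies the argument the authors chose to omit.
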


This colored regularity lemma gives rise to a cluster hypergraph where, for each regular $k$-tuple, we record the colors that appear with density larger than a certain threshold $\eta$.
\begin{definition}[Multicolored cluster hypergraph]\label{defmulticolcluster}
Let $H = (V, E)$ be a $k$-uniform hypergraph, whose set of hyperedges is $r$-colored $E(H) = E_1\cup\cdots\cup E_r$. Consider an  equitable partition $\V = \{V_1,\ldots, V_m\}$ of $V$ that is simultaneously $\epsilon$-regular for all sub-hypergraphs $H_i = (V, E_i)$, where $i\in[r]$.  For $\eta>0$, the \emph{multicolored cluster hypergraph} $\R = \R_H(\V, \eta)$ associated with this partition $\V$ and $\eta$ is the hypergraph with vertex set $[m]$ and edge set $E_{\mathcal{R}}$ defined as follows. A $k$-set $\{i_1, \ldots, i_k\}\in\binom{[m]}{k}$ belongs to $E_{\mathcal{R}}$ if the $k$-tuple $(V_{i_1}, \ldots, V_{i_k})$ is $\epsilon$-regular with respect to all colors and $d_{H_i}(\{V_{i_1}, \ldots, V_{i_k}\})\geq \eta$ for some $i\in[r]$.  Moreover, every hyperedge $\{{i_1}, \ldots, {i_k}\}\in E_\R$ is assigned a list $L_{\{{i_1}, \ldots, {i_k}\}} = \{i\in[r]\colon d_{H_i}(\{V_{i_1}, \ldots, V_{i_k}\})\geq\eta\}$ of \emph{$\eta$-dense} colors with respect to $(V_{i_1}, \ldots, V_{i_k})$.
\end{definition}

A counting lemma  (also known as the Key Lemma) in the context of the Weak Hypergraph Regularity Lemma was proved in~\cite{KNRSembedding}. However, it holds only for \emph{linear hypergraphs}, that is, for hypergraphs $F$ for which the size of the intersection of any two hyperedges is at most one. Here we state this result in a colored form. We say that a hypergraph $F$ is a \emph{colored hypergraph} if every hyperedge is assigned a color and $\R$ is a \emph{multicolored hypergraph} if every hyperedge of $\R$ is assigned a nonempty list of colors.
\begin{definition}[Colored sub-hypergraph]
Let $\R$ be a multicolored hypergraph. A colored hypergraph $F$ is said to be a \emph{colored sub-hypergraph} of $\R$ if the following holds:
\begin{itemize} 
\item[(a)] Ignoring colors and lists of colors, $F$ is a sub-hypergraph of $\R$. 
\item[(b)] For every hyperedge $e\in E(F)$, the color of $e$ in $F$ belongs to the list $L_e$ of colors of $e$ in $\R$.
\end{itemize}
\end{definition}

\begin{lemma}[Multicolored Embedding Lemma] \label{lem:clusterlemma}
For all integers $r, k\geq 2$, and every $\eta>0$ there exist a constant $\epsilon=\epsilon(r, k, \eta)$ and an integer $s_0 = s_0(r, k, \eta)$ such that the following holds for every positive integer $m$. Let $H = (V, E)$ be a hypergraph whose hyperedges are $r$-colored $E(H) = E_1\cup\cdots\cup E_r$, and consider a partition $\V = \{V_1,\ldots, V_m\}$ of $V$ that is simultaneously $\epsilon$-regular for all sub-hypergraphs $H_i = (V, E_i)$, where $i\in[r]$. Further assume that $|V_j|\geq s_0$ for every $j\in[m]$. Fix a colored linear hypergraph $F$. If $F$ is a colored sub-hypergraph of $\R_H(\V, \eta)$, then $H$ contains a copy of $F$.
\end{lemma}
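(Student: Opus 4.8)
The plan is to first eliminate the colors by passing to a single auxiliary hypergraph, and then to appeal to the uncolored embedding lemma for linear hypergraphs from~\cite{KNRSembedding}, whose proof idea I also recall since that is where the linearity of $F$ enters. Throughout, $\eps$ and $s_0$ are chosen small, resp.\ large, in terms of $r$, $k$, $\eta$ and the fixed hypergraph $F$ (concretely its number of vertices $f$ and its maximum degree $\Delta$).

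\emph{Eliminating the colors.} Fix an injection $\phi\colon V(F)\hookrightarrow[m]$ witnessing that $F$ is a colored sub-hypergraph of $\R=\R_H(\V,\eta)$, and for $e\in E(F)$ let $c(e)\in[r]$ be the color assigned to $e$ in $F$. By the definition of a colored sub-hypergraph and of $\R$, for each $e\in E(F)$ the $k$-tuple $(V_{\phi(u)}\colon u\in e)$ is $\eps$-regular with respect to every color and satisfies $d_{H_{c(e)}}(V_{\phi(u)}\colon u\in e)\ge\eta$. I would then define a $k$-uniform hypergraph $H^\ast$ on $V$ as follows: for each $e\in E(F)$, a set $\{z_u\colon u\in e\}$ with $z_u\in V_{\phi(u)}$ for all $u\in e$ is an edge of $H^\ast$ exactly when it belongs to $E_{c(e)}$, and $H^\ast$ has no further edges. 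Since $\phi$ is injective, distinct edges of $F$ give distinct $k$-subsets of $[m]$, so this is well defined, and on each cluster $k$-tuple $H^\ast$ either agrees with some $H_i$ or is empty; hence $\V$ is $\eps$-regular for $H^\ast$, and one checks that $E\big(\R_{H^\ast}(\V,\eta)\big)=\{\{\phi(u)\colon u\in e\}\colon e\in E(F)\}$, so that $\phi$ exhibits $F$, now regarded as an uncolored hypergraph, as a sub-hypergraph of $\R_{H^\ast}(\V,\eta)$. Moreover any copy of $F$ in $H^\ast$ mapping each $v_j$ into $V_{\phi(j)}$ uses, in the edge corresponding to $e$, a set from $E_{c(e)}$, and so is precisely a copy of the colored hypergraph $F$ in $H$. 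It therefore suffices to prove the uncolored statement: if a linear $k$-uniform $F$ is a sub-hypergraph of $\R_{H^\ast}(\V,\eta)$ via $\phi$, then $H^\ast$ contains a copy of $F$ mapping each $v_j$ into $V_{\phi(j)}$.

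\emph{The uncolored embedding.} This is the ``Key Lemma'' of~\cite{KNRSembedding}, and I would prove it by embedding $v_1,\dots,v_f$ one at a time; as $\phi$ is injective, each cluster receives at most one image, so the images are automatically distinct. For every not-yet-embedded vertex one maintains a large candidate set inside its cluster, and for every hyperedge of $F$ a density certificate asserting that the images already fixed for its embedded vertices have many completions, inside the current candidate sets of its remaining vertices, to an edge of $H^\ast$; once all $k$ vertices of a hyperedge are embedded this certificate forces the required edge to be present. When embedding $v_t$, one chooses its image in its candidate set so as to keep valid the certificate of every hyperedge through $v_t$ while shrinking only slightly the candidate sets of later vertices sharing a hyperedge with $v_t$. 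If $v_t$ is the \emph{first} vertex of a hyperedge $e$ to be embedded, then a direct use of $\eps$-regularity of the cluster $k$-tuple of $e$ shows that all but an $\eps$-fraction of $V_{\phi(t)}$ are admissible for $e$; the ``deeper'' steps for $e$ require a more delicate analysis of its density certificate. Summing the forbidden fractions over the at most $\Delta$ hyperedges through $v_t$, and using that the candidate sets stay above a fixed positive fraction of the cluster size (hence nonempty once $s_0$ is large), an admissible image for $v_t$ exists at every step, and after $f$ steps one has a copy of $F$.

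\emph{The main obstacle.} The crux -- and the reason $F$ has to be linear -- is that weak regularity only controls densities of $H^\ast$ over products of \emph{large} subsets of the clusters, and gives nothing once some coordinates are fixed to single vertices, so the certificate of a hyperedge whose vertices are embedded one by one is not directly governed by regularity. Linearity is exactly what keeps this in check: since any two vertices of $F$ lie in at most one common hyperedge, embedding $v_t$ activates at most one new shrinking step on the candidate set of any other vertex, and each hyperedge contributes a bounded, well-ordered sequence of such steps rather than an uncontrolled cascade. Setting up the system of invariants so that these accumulated losses are absorbed -- using that each vertex lies in at most $\Delta$ hyperedges and that all relevant cluster densities are at least $\eta$ -- is the technical heart of the argument and is carried out in~\cite{KNRSembedding}; the new ingredient needed here is only the color-elimination reduction above.
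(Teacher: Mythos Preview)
The paper does not supply a proof of this lemma; it is simply stated as the colored form of the Key Lemma for linear hypergraphs from~\cite{KNRSembedding}. Your sketch provides exactly what such a derivation needs: a clean reduction to the uncolored statement by passing to the auxiliary hypergraph $H^\ast$ that retains only the color-$c(e)$ edges on the cluster $k$-tuple of each $e\in E(F)$ (well defined because $\phi$ is injective, so distinct edges of $F$ sit on distinct cluster $k$-tuples), followed by an appeal to the uncolored embedding lemma of~\cite{KNRSembedding}. Regularity and the density bound $\eta$ on each relevant $k$-tuple are inherited from the corresponding $H_{c(e)}$, and a partite copy of $F$ in $H^\ast$ is automatically a correctly colored copy in $H$, so the reduction is sound. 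Your informal description of the vertex-by-vertex embedding, and of why linearity is what keeps the cascade of candidate-set shrinkings bounded, is accurate as a sketch of the argument in~\cite{KNRSembedding}.

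One small point worth flagging: you let $\eps$ and $s_0$ depend on $F$ (through $f$ and $\Delta$), whereas the lemma as stated lists only $r,k,\eta$. The constants in~\cite{KNRSembedding} do depend on the target hypergraph, so this is an imprecision in the paper's formulation rather than a flaw in your argument; it is harmless here because the only application, in the proof of Lemma~\ref{lem:est:col:FK_fano}, takes $F$ to be the Fano plane, which is fixed before $\eps$ and $s_0$ are chosen.
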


\subsection{The Fano plane} The \emph{Fano plane} is the unique linear $3$-uniform hypergraph on seven vertices and seven hyperedges. As mentioned in the introduction, F\"uredi and Simonovits~\cite{FSstability} and, independently, Keevash and Sudakov~\cite{KSstability} have proved that the unique extremal hypergraph for the Fano plane is $B_n$, for $n$ sufficiently large. Bellmann and Reiher~\cite{BRstability} proved this for all $n \geq 8$, which is best possible. Note that, considering the parity of $n$, we have 
\begin{eqnarray}\label{eq|E(B_n)|}
\frac{n^3}{8}-\frac{n^2}{4}-\frac{n}{8}+\frac{1}{4} \leq |E(B_n)| = \ex(n, \Fano) \leq \frac{n^3}{8} - \frac{n^2}{4}.
\end{eqnarray}

For a $3$-uniform hypergraph $H=(V,E)$ and a subset $A \subseteq V$, let $E_H(A)$ denote the set and $e_H(A)$ denote the number of hyperedges that are contained in $A$ (we write $E(A)$ or $e(A)$ if the hypergraph under consideration is obvious).

To derive the extremality of $B_n$, Keevash and Sudakov~\cite{KSstability} (and F\"{u}redi and Simonovits~\cite{FSstability}) established a stability result as follows.
\begin{theorem}[Stability]\label{theoremstabilityfano}
For every $\delta>0$ there exist $\epsilon = \epsilon(\delta)>0$ and $n_0$ such that every $n$-vertex $\Fano$-free hypergraph $H = (V, E)$ with $n>n_0$ containing at least $\ex(n,\Fano)-\epsilon n^3$ hyperedges admits a partition $V(H) = A \cup B $ with $e(A)+e(B)\leq \delta n^3$.
\end{theorem}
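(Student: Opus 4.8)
The plan is to run the standard \emph{stability method} for dense hypergraph Tur\'an problems: use the Weak Regularity Lemma to replace $H$ by a bounded-size reduced hypergraph $\R$, transfer $\Fano$-freeness to $\R$ (this is where it matters that the Fano plane is \emph{linear}), and then establish the approximate bipartite structure of the near-extremal $\Fano$-free hypergraph $\R$ by a direct combinatorial argument. Fix $\delta>0$, let $\epsilon_0=\epsilon_0(\delta)>0$ be small enough for the combinatorial step below, choose auxiliary constants $\epsilon_{\mathrm{reg}}\ll d\ll\epsilon_0$, take $m_0=m_0(\delta)$ large, let $M_0=M_0(1,3,m_0,\epsilon_{\mathrm{reg}})$ be as in Theorem~\ref{theoremcoloredregularity}, take $n_0$ sufficiently large (in particular $n_0\gg M_0$), and set $\epsilon=\epsilon(\delta)\le\epsilon_0/4$ (this is the $\epsilon$ of the statement). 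Apply the $r=1$ case of Theorem~\ref{theoremcoloredregularity} to $H$ with parameters $\epsilon_{\mathrm{reg}},m_0$ to obtain an $\epsilon_{\mathrm{reg}}$-regular equitable partition $\V=\{V_1,\dots,V_t\}$ with $m_0\le t\le M_0$, and let $\R=\R_H(\V,d)$ be the associated cluster hypergraph on $[t]$, so that $\{i,j,k\}\in E(\R)$ precisely when $(V_i,V_j,V_k)$ is $\epsilon_{\mathrm{reg}}$-regular of density at least $d$. Then (a) $\R$ is $\Fano$-free: the Fano plane is linear, so a copy of it inside $\R$ would embed into $H$ by Lemma~\ref{lem:clusterlemma} (applied with two colours, all hyperedges of $H$ receiving colour $1$), contradicting $\Fano$-freeness of $H$; and (b) $\R$ is nearly extremal, because every hyperedge of $H$ lies either inside some $V_i$, or inside an $\epsilon_{\mathrm{reg}}$-irregular triple, or inside a regular triple of density below $d$, or inside a triple of $\R$, so that
$$
e(\R)\left(\tfrac{n}{t}\right)^3\;\ge\;e(H)-\tfrac{n^3}{t^2}-\epsilon_{\mathrm{reg}}\tbinom{t}{3}\left(\tfrac{n}{t}\right)^3-d\tbinom{t}{3}\left(\tfrac{n}{t}\right)^3\;\ge\;\left(\tfrac18-\epsilon-\epsilon_{\mathrm{reg}}-d-o(1)\right)n^3 ,
$$
whence, using~\eqref{eq|E(B_n)|}, $e(\R)\ge\ex(t,\Fano)-\epsilon_0 t^3$.

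It therefore suffices to prove the following finite, purely combinatorial statement and then transfer it back: \emph{every $\Fano$-free $3$-uniform hypergraph $\R$ on $t$ vertices with $e(\R)\ge\ex(t,\Fano)-\epsilon_0 t^3$ admits a partition $[t]=A_0\cup B_0$ with $e_{\R}(A_0)+e_{\R}(B_0)\le(\delta/2)t^3$}. Granting this, define $V(H)=A\cup B$ by placing all of $V_i$ in $A$ if $i\in A_0$ and in $B$ if $i\in B_0$. Any hyperedge of $H$ inside $A$ or inside $B$ is of one of the four types above, and if it projects to a triple of $\R$ then that triple lies inside $A_0$ or inside $B_0$; summing the four contributions yields $e_H(A)+e_H(B)\le\tfrac{n^3}{t^2}+\epsilon_{\mathrm{reg}}n^3+d\,n^3+(\delta/2)(n/t)^3t^3\le\delta n^3$, using that $m_0$ is large in terms of $\delta$ and that $\epsilon_{\mathrm{reg}},d\le\delta/8$. (This regularity reduction does \emph{not} by itself prove the theorem, because $t$ is bounded only for fixed $\delta$ and grows as $\delta\to0$; genuine combinatorial input is required, and it is exactly the stability theorems of F\"uredi--Simonovits and Keevash--Sudakov cited above.)

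For the reduced statement I would use two structural facts about the Fano plane $F$: it is not $2$-colourable, but removing any single point leaves a hypergraph that \emph{is} $2$-colourable; and moreover $F$ minus a point is $3$-partite, hence has Tur\'an density $0$. First \emph{clean} $\R$, repeatedly deleting vertices of degree below $(\tfrac38-\gamma)|V|^2$, to reduce to a $\Fano$-free hypergraph of essentially unchanged density in which every link graph $L_v$ has at least $(\tfrac34-o(1))\tbinom{t}{2}$ edges. The crucial \emph{local lemma} is that, because $\R$ is $\Fano$-free, each such dense link $L_v$ is $o(t^2)$-close to a \emph{complete split graph} $\overline{K_{s_v}}+K_{t-1-s_v}$ with $s_v=(\tfrac12+o(1))t$: indeed, a copy of $F$ through $v$ is nothing but a copy in $\R$ of the ($3$-partite) hypergraph obtained from $F$ by deleting a point, together with the three colour-class pairs of that copy all lying in $L_v$; since $\R$ is dense and $F$ minus a point has Tur\'an density $0$, there are very many such copies, and $\Fano$-freeness forces $L_v$ to miss at least one colour-class pair from each of them, which for a graph of density $\tfrac34$ essentially forces the large independent set that a complete split graph carries. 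Finally, a majority/symmetrisation step over the vertices shows that for almost all pairs $u,v$ the independent sides of $L_u$ and $L_v$ either nearly coincide or are nearly complementary, with exactly two ``sides'' occurring (one side would make $e(\R)$ too small, three or more would over-constrain it); the individual splits then consolidate into one global bipartition $[t]=A_0\cup B_0$, and the $o(t^2)$ error per link accumulates to only $o(t^3)\le(\delta/2)t^3$ monochromatic hyperedges.

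The main obstacle is the local lemma on links: pinning down exactly which deviations of $L_v$ from a complete split graph force a copy of $\Fano$ through $v$, which requires a careful analysis of the seven lines of the Fano plane organised as ``three lines through a point'' plus the $3$-partite ``$F$ minus a point''. A secondary difficulty is making the symmetrisation globally consistent, i.e.\ coherently orienting the independent sides of all the links so that they define a single bipartition; and one must keep the constants ordered as $\epsilon_{\mathrm{reg}}\ll d\ll\epsilon_0\ll\delta$ so that the losses from regularity, from the density threshold $d$, and from the cleaning step each remain well below $\delta n^3$.
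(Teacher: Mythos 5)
The paper does not prove Theorem~\ref{theoremstabilityfano} at all: it is stated as a known result of Keevash--Sudakov~\cite{KSstability} and F\"uredi--Simonovits~\cite{FSstability}, with the proof deferred to those references (and a quantitative refinement deferred to~\cite{stabilityFano}). So there is no proof in the paper to compare you against, and your attempt is being judged on its own.

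Your regularity preprocessing is technically sound: the Fano plane is linear, so a Fano in $\R = \R_H(\V,d)$ lifts to a Fano in $H$ via Lemma~\ref{lem:clusterlemma}; the density bookkeeping transfers near-extremality of $H$ to near-extremality of $\R$; and a good bipartition of $[t]$ pulls back to a good bipartition of $V(H)$ with the error terms you list. But, as you correctly observe yourself, this step reduces ``stability on $n$ vertices'' to the literally identical statement ``stability on $t$ vertices with $m_0 \le t \le M_0(\delta)$,'' and since $M_0(\delta)\to\infty$ as $\delta\to 0$, no finite case check or compactness argument closes the gap. The regularity lemma therefore buys you nothing here; it just relabels $n$ as $t$. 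Keevash--Sudakov and F\"uredi--Simonovits work directly with $H$.

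The genuine gap is the combinatorial core, which you only sketch. Your ``local lemma'' — that in a dense Fano-free hypergraph every dense link $L_v$ is $o(t^2)$-close to a complete split graph $\overline{K_{s}}+K_{t-1-s}$ — is the heart of the theorem and is not proved. The heuristic is correct in outline (yes, $F$ minus a point is $3$-partite and linear, so it supersaturates in any positive-density hypergraph; yes, the three pairs from the lines through $v$ are exactly its three colour classes; yes, Fano-freeness forces every copy to miss one of those pairs in $L_v$), but the deduction ``a dense graph that misses one of three prescribed pairs from each of a supersaturated family of triples must be close to a complete split graph'' is exactly the hard part and cannot be waved through, particularly with explicit dependence of the $o(t^2)$ error on $\epsilon_0$. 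The subsequent ``symmetrisation'' — consolidating the per-vertex bipartitions, showing exactly two sides occur, and orienting them consistently — is a second nontrivial argument that you also leave unproven. You flag both of these explicitly as ``the main obstacle'' and ``a secondary difficulty,'' which is an honest assessment: those two items \emph{are} the theorem. As it stands the proposal is a reasonable account of what a proof would have to do, but not a proof.
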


A careful analysis of the proof of~\cite[Theorem~1.2]{KSstability} allows us to obtain the following quantitative version of Theorem~\ref{theoremstabilityfano}, whose proof may be found in~\cite{stabilityFano}.
\begin{theorem}\label{thm:kee1}
For any fixed $0 < \delta \leq  1/36^8$, there exists $n_0$ such that the following holds for all $n \geq n_0$. If $H = (V,E)$ is a $\Fano$-free $3$-uniform hypergraph on $n \geq n_0$ vertices with $\ex(n, \Fano)- \delta n^3$ hyperedges, then there is a partition $V(H) = A \cup B$ so that $e_H(A) + e_H(B)  < 2 \delta^{1/64}n^3$.
\end{theorem}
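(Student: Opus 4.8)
\emph{Proof proposal.} This is a quantitative sharpening of Theorem~\ref{theoremstabilityfano}, so the plan is to revisit the stability argument of Keevash and Sudakov~\cite{KSstability} (and, independently, F\"uredi and Simonovits~\cite{FSstability}) while keeping track of how the auxiliary parameter depends on the approximation parameter, and to verify that this dependence is polynomial --- indeed that one may take the error $\epsilon(\delta)$ as large as $\delta^{1/64}$ up to a constant. It should be stressed that one cannot simply feed $\Fano$-freeness into a removal lemma, since that route yields only a tower-type dependence; the bound has to come from a direct argument in which every error term is controlled.

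Let $H$ be a $\Fano$-free $3$-graph on $n$ vertices with $\ex(n,\Fano)-\delta n^3$ edges, so that $e(H)=\tfrac{n^3}{8}(1+O(\delta))$ and, by~\eqref{eq|E(B_n)|}, the average degree of $H$ is $\tfrac34\binom n2(1+O(\delta))$. The first step is a \emph{cleaning} step: one repeatedly deletes vertices whose degree drops below $\big(\tfrac34-c\,\delta^{a}\big)\binom n2$ for a small exponent $a>0$ to be fixed. Since $e(H)$ is near-maximal, only $O(\delta^{a}n)$ vertices and $O(\delta^{a}n^3)$ edges are removed, so the resulting hypergraph $H'$ on $n'=(1-O(\delta^{a}))n$ vertices is still $\Fano$-free, still has $\ex(n',\Fano)-O(\delta^{a})(n')^3$ edges, and now has minimum degree at least $\big(\tfrac34-O(\delta^{a})\big)\binom{n'}2$; a similar truncation ensures that all but $O(\delta^{a})(n')^2$ pairs $u,v$ have linearly large codegree $d(u,v)=|\{w:uvw\in E(H')\}|$.

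The heart of the matter is a \emph{local structure} step. A copy of the Fano plane through a vertex $v$ is determined by a perfect matching in the link graph $H'_v$ together with four prescribed ``transversal'' triples among the other six vertices; thus $\Fano$-freeness of $H'$ forbids a specific configuration built from three pairwise disjoint edges of $H'_v$ and four edges of $H'$. Combining this with the near-regularity of degrees and codegrees, a supersaturation argument should show that for all but $O(\delta^{b})n$ \emph{good} vertices $v$ the link $H'_v$ lies within $O(\delta^{b})(n')^2$ edges of a graph of the same shape as the link of a vertex of $B_n$ (a large clique, a large independent set, and all edges between them): if $H'_v$ were far from every such graph, its surplus edges would combine with the dense transversal structure of $H'$ to embed a Fano. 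The graph-theoretic stability facts invoked here all carry polynomial error, so this step costs only a fixed root of $\delta$. A \emph{globalization} step then shows that the local structures are mutually consistent: if good vertices $u$ and $v$ lie together in many edges of $H'$ but their local partitions disagree, one again finds a Fano; propagating this along the dense ``codegree graph'' of $H'$ produces a single partition $V(H')=A'\cup B'$ with which the clique and independent parts of $H'_v$ agree, up to $O(\delta^{c})n$ vertices, for every good $v$. Finally, a \emph{counting} step: each edge of $H'$ contained in $A'$ or $B'$ and using a good vertex $v$ contributes an exceptional pair to $H'_v$; summing over good vertices, reinserting the $O(\delta^{b}n)$ bad vertices and the $O(\delta^{a}n)$ vertices deleted in the cleaning step (placing them on either side), and passing to the induced partition $V(H)=A\cup B$, one gets $e_H(A)+e_H(B)=O(\delta^{d})n^3$.

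The exponents $a,b,c,d$ emerge from finitely many applications of (defect) Cauchy--Schwarz and of the stability inputs; since each step costs at most a fixed root of the current power of $\delta$ and there are only boundedly many steps, one can arrange $d\ge 1/64$ and, for $\delta\le 36^{-8}$ and $n$ large, absorb the implied constant to obtain $e_H(A)+e_H(B)<2\delta^{1/64}n^3$. I expect the genuinely delicate points to be the local-structure and globalization steps: one must isolate exactly which local configurations certify a copy of the Fano plane and then turn ``a link is far from the model graph'' (respectively ``two links disagree'') into an honestly embedded Fano, all with errors polynomial in $\delta$. This is precisely the bookkeeping carried out in~\cite{stabilityFano}.
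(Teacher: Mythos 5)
The paper does not prove Theorem~\ref{thm:kee1} in-text. It is stated as a quantitative refinement of Theorem~\ref{theoremstabilityfano}, the method is described only as ``a careful analysis of the proof of~\cite[Theorem~1.2]{KSstability}'', and the reader is referred to the companion note~\cite{stabilityFano} for the argument. So there is no paper-internal proof against which to check you, and the best I can do is assess whether your outline is a plausible reconstruction of that note.

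At the level of strategy, your outline is the right shape for a quantitative redo of the Keevash--Sudakov stability argument: clean to raise minimum degree and (most) codegrees, show that the link of a typical vertex is within a power of $\delta$ of the split-graph shape of a vertex link in $B_n$, globalize the local bipartitions along pairs of large codegree, and count. You are also right that one must bypass removal-lemma machinery, since that route only gives tower-type $\epsilon(\delta)$. But what you have written is a roadmap, not a proof. Every quantitatively decisive step is asserted rather than established: which concrete configuration of a $3$-matching in a link together with four transversal triples must appear if the link has too many surplus edges, and why its absence has density consequences at scale $\delta^{b}$; why two vertices of large codegree whose local partitions disagree force a Fano; and in particular why the chain of costs lands at exponent $1/64$ with leading constant $2$ rather than at some other fixed root. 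One smaller point worth flagging: you obtain the ``linearly large codegree for all but $O(\delta^{a})n^{2}$ pairs'' claim by ``a similar truncation'', but a degree-type deletion argument does not delete pairs, and a straight averaging argument on $\sum_{u,v} d(u,v)=3e(H)$ only constrains the codegree distribution weakly, so this would need a genuine argument (or has to be replaced by a different statement). None of this shows the plan would fail, but since the details live in~\cite{stabilityFano} and neither you nor this paper reproduces them, I cannot verify your proposal beyond saying it is consistent with the approach the paper attributes to that note.
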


Let $H$ be a hypergraph and let $X \cup Y$ be a bipartition of its vertex set. Hyperedges of $H$ that contain at least one vertex from each of the two classes are said to be \emph{crossing}, and the set of all crossing hyperedges with respect to the given partition is denoted by $E_C(H)$. The set of all non-crossing hyperedges with respect to the given partition is denoted by $E_N(H)$
\begin{lemma}\label{lem:sizes}
For every $\delta > 0$ and
integer $n\geq \max \{8,1/\sqrt{\delta} \}$,  let $H = (V, E)$ be an $n$-vertex $3$-uniform  hypergraph such that there exists a partition $\V = \{X, Y\}$ of $V$, for which
$$|E_C(H)| \geq |E(B_n)|-\delta n^3.$$
The following inequalities hold:
$$\frac{n}{2}-2n\sqrt{\delta}\leq \min\{|X|, |Y|\}\leq\max\{|X|, |Y|\}\leq \frac{n}{2}+2n\sqrt{\delta}.$$
\end{lemma}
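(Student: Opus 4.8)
The plan is to bound the number of crossing hyperedges purely in terms of $|X|$ and $|Y|$, and then compare with the hypothesis $|E_C(H)|\geq |E(B_n)|-\delta n^3$ to force $|X|$ and $|Y|$ to be close to $n/2$. Write $x=|X|$ and $y=|Y|=n-x$, and assume without loss of generality that $x\leq y$. A crossing hyperedge is a triple meeting both classes, so the number of crossing hyperedges is at most the number of triples of $[n]$ that are not entirely inside $X$ or entirely inside $Y$, namely $\binom{n}{3}-\binom{x}{3}-\binom{y}{3}$. First I would record the elementary identity that this quantity, as a function of $x$ with $x+y=n$, is exactly the number of edges of the complete bipartite-type hypergraph on parts of sizes $x$ and $y$, and that it is maximized when $x=\lfloor n/2\rfloor$, where it equals $|E(B_n)|=\ex(n,\Fano)$.

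The heart of the argument is a quantitative version of this maximality: I would show that if $x=n/2-t$ (so $y=n/2+t$ with $t\geq 0$), then
\begin{equation*}
\binom{n}{3}-\binom{x}{3}-\binom{y}{3}\ \leq\ |E(B_n)|-c\,t^2 n
\end{equation*}
for an absolute constant $c>0$ (concretely, expanding $\binom{x}{3}+\binom{y}{3}$ as a polynomial in $t$, the sum is $\tfrac14 t^2 n+O(t^2)+(\text{value at }t=0)$, so one can take any $c$ slightly below $1/4$, say $c=1/8$, for $n$ large, and in fact the bound holds for all $n\geq 8$ after absorbing lower-order terms into the floor discrepancy between $|E(B_n)|$ and $\binom n3-2\binom{n/2}{3}$). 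Combining this with the hypothesis gives $|E(B_n)|-\delta n^3\leq |E_C(H)|\leq |E(B_n)|-c\,t^2 n$, hence $t^2\leq \delta n^2/c$, i.e. $t\leq \sqrt{\delta/c}\,n\leq 2n\sqrt{\delta}$ once $c\geq 1/4 - o(1)$ is controlled, which it is for $n\geq 8$. Therefore $\min\{x,y\}=n/2-t\geq n/2-2n\sqrt\delta$ and $\max\{x,y\}=n/2+t\leq n/2+2n\sqrt\delta$, as claimed.

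A minor technical point to be careful about is the parity of $n$ and the comparison between $|E(B_n)|$ (the exact edge count, which involves $\lfloor n/2\rfloor$ and $\lceil n/2\rceil$) and the continuous expression $\binom{n}{3}-2\binom{n/2}{3}$; the discrepancy is $O(n^2)$, which for $n\geq 1/\sqrt\delta$ is at most $\delta n^3$-comparable and can be folded into the constant, but one should state the inequality in a way that survives both parities. The condition $n\geq\max\{8,1/\sqrt\delta\}$ is exactly what is needed to make these lower-order corrections harmless and to ensure the displayed bounds $n/2\pm 2n\sqrt\delta$ are not vacuous. I do not expect any genuine obstacle here — the only mild subtlety is keeping the constant in front of $t^2 n$ favourable enough (at least $1/4$ asymptotically) to land the clean factor $2$ in $2n\sqrt\delta$ rather than a worse constant; a direct expansion of the cubic binomials settles this.
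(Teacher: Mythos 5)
Your approach matches the paper's: bound $|E_C(H)|$ from above by $\binom{n}{3}-\binom{|X|}{3}-\binom{|Y|}{3}=|X|\binom{|Y|}{2}+|Y|\binom{|X|}{2}=\tfrac12|X||Y|(n-2)$ and compare with the hypothesis to pin $|X|$ near $n/2$. Two small slips worth flagging, both in your favour: with $|X|=n/2-t$ the exact identity is $\binom{n}{3}-\binom{|X|}{3}-\binom{|Y|}{3}=\tfrac12\bigl(n^2/4-t^2\bigr)(n-2)$, so the coefficient of $t^2$ is $\tfrac{n-2}{2}$ (about $n/2$, not $n/4$, and certainly not your suggested $c=1/8$, which would only give $t\leq 2\sqrt{2}\sqrt\delta\,n$), and the parity discrepancy between $|E(B_n)|$ and $\tfrac12\cdot\tfrac{n^2}{4}(n-2)$ is only $\tfrac{n-2}{8}=O(n)$, not $O(n^2)$; with the true coefficient $(n-2)/2\geq n/4$ for $n\geq 4$, the clean bound $t\leq 2\sqrt\delta\,n$ drops out directly once the $O(n)$ correction is absorbed using $n\geq 1/\sqrt\delta$, which is exactly what the paper does.
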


\begin{proof}
Let $|X|=a$ and $|Y| = n-a$. With~(\ref{eq|E(B_n)|}), we infer that
\begin{eqnarray*}
&&a\binom{n-a}{2} + (n-a)\binom{a}{2} \geq \ex(n, \Fano) - \delta n^3 \geq \frac{n^3}{8} - \frac{n^2}{4} - \frac{n}{8} + \frac{1}{4} - \delta n^3\\
&\Rightarrow&
an^2-2an-a^2n+2a^2\geq\frac{n^3}{4} - \frac{n^2}{2} - \frac{n}{4} - 2\delta n^3\\
&\Rightarrow& \left| a - \frac{n}{2} \right|  \leq \sqrt{\frac{n}{4(n-2)} + \frac{2\delta n^3}{n-2}} \stackrel{(n \geq 1/\sqrt{\delta}, n \geq 8)}{\leq} 2\sqrt{\delta} n. 
\end{eqnarray*}
\end{proof}

We will  use the entropy function $h \colon [0,1] \rightarrow [0,1]$ given by $h(x) = -x \log_2 x - (1-x) \log_2(1-x)$ for $0<x<1$ and $h(0) = h(1) = 0$. For  $0 \leq \alpha \leq 1$ the following inequality is well-known: 
\begin{eqnarray} \label{eq:entropy1}
\binom{n}{\alpha n} \leq 2^{h(\alpha) n}.
\end{eqnarray}

We will also use the following upper bound on the entropy function for $x \leq 1/8$:
\begin{eqnarray} \label{eq:entropy2}
h(x) &\leq& -2x \log_2 x.
\end{eqnarray}
 Namely, (\ref{eq:entropy2}) is equivalent to 
 $g(x) = x \ln x - (1-x) \ln (1-x) \leq 0$. Taking the derivative 
gives
$g'(x) = \ln x + 2 + \ln (1-x) \leq 0$ for $x \leq 1/8$. With  $g(1/8) < 0$ 
inequality~(\ref{eq:entropy2}) follows.

\section{A stability result for $F^R$-free $r$-colorings} \label{sec:stability}

To prove Theorem~\ref{thm:main:fano}, we first establish a stability result for colorings, which implies that, for $n$ sufficiently large,  any $n$-vertex hypergraph with a large number of $F^R$-free $r$-colorings must be structurally similar to the hypergraph $B_n$.

\begin{lemma}\label{lem:est:col:FK_fano}
 For any fixed $0 < \delta \leq 1/36^8$, there exists $r_0 = r_0(\delta)  = 6^{{492}^{64}/\delta^{63}} $ such that the following holds for all $r \geq r_0$. There is $n_1$ such that, if $n \geq n_1$ and $H = (V, E)$ is a $3$-uniform $n$-vertex hypergraph satisfying $c_{r,F^R}(H) \geq r^{\ex(n,\Fano)}$, then there is a partition $\cV = \{V_1,  V_2\}$ of $V$ with $e(V_1) + e(V_2) \leq \delta n^3$.
\end{lemma}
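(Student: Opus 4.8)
The plan is to count the number of $F^R$-free $r$-colorings of $H$ and compare with $r^{\ex(n,\Fano)}$. The starting point is to apply the Colored Regularity Lemma (Theorem~\ref{theoremcoloredregularity}) to $H$ together with a fixed $F^R$-free $r$-coloring, obtaining an equitable $\epsilon$-regular partition $\V = \{V_1,\dots,V_m\}$ with $m_0 \leq m \leq M_0$; here $\epsilon$, $\eta$ and $m_0$ are chosen as functions of $\delta$ (and of the constants supplied by the Multicolored Embedding Lemma, Lemma~\ref{lem:clusterlemma}, applied to $F = \Fano$). The key structural observation is that if the multicolored cluster hypergraph $\R = \R_H(\V,\eta)$ contained, as a colored sub-hypergraph, a \emph{rainbow} Fano plane — i.e.\ seven hyperedges of $\R$ whose lists of $\eta$-dense colors admit a system of seven distinct representatives forming the Fano pattern — then Lemma~\ref{lem:clusterlemma} would produce a rainbow copy of $\Fano$ in $H$ under the given coloring, a contradiction. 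Hence every $F^R$-free $r$-coloring of $H$ gives rise to a cluster hypergraph $\R$ that is ``rainbow-$\Fano$-free'' in this list sense. Since $r$ is enormous, an edge of $\R$ whose list has size, say, at least $7$ essentially behaves like a rainbow-capable edge; so the combinatorics forces that, after discarding edges with short lists, $\R$ (viewed as an uncolored $3$-uniform hypergraph on $[m]$) is itself close to being $\Fano$-free.

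The next step is a counting argument in the spirit of step~(ii) of Alon--Balogh--Keevash--Sudakov. The number of $F^R$-free $r$-colorings of $H$ is at most the number of choices of a regular partition (at most $M_0^n$, a constant to the power $n$, hence negligible) times the maximum, over admissible cluster hypergraphs $\R$, of the number of $r$-colorings of $E(H)$ that are ``compatible'' with $\R$ and rainbow-$\Fano$-free. Compatible colorings are controlled as follows: hyperedges of $H$ lying inside clusters, inside irregular tuples, or inside regular tuples of density below $\eta$ number at most $(\eta + \epsilon + o(1))n^3 + \binom{m}{2}\cdot (n/m)^3 \leq \delta' n^3$ for a small $\delta'$, and each may be colored in $r$ ways freely; the remaining hyperedges lie in regular tuples corresponding to edges of $\R$, and on each such tuple the coloring must avoid creating a rainbow $\Fano$, which in particular (since each list would otherwise be forced to be large) constrains the number of colors used per tuple. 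Bounding the total gives roughly $r^{e(H) - (\text{number of hyperedges ``wasted'' by non-edges of }\R)}\cdot(\text{small correction})$.

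The heart of the matter is then a purely extremal/entropy estimate. If $H$ had no bipartition $\{V_1,V_2\}$ with $e(V_1)+e(V_2)\leq\delta n^3$, one shows (using Theorem~\ref{thm:kee1}, the quantitative stability for $\Fano$, contrapositively, together with the cluster structure) that $e(H) \leq \ex(n,\Fano) - c\delta n^3$ for some constant $c>0$, \emph{or} that $\R$ must be far from $\Fano$-free and hence, by a supersaturation/removal argument, that the number of hyperedges of $H$ not covered by edges of a $\Fano$-free subhypergraph of $\R$ is at least $c\delta n^3$. In either case the number of compatible $F^R$-free colorings is at most
\begin{equation}\label{eq:countbound}
M_0^n \cdot r^{\ex(n,\Fano) - c\delta n^3} \cdot 2^{O(\delta' n^3 \log r)},
\end{equation}
and choosing $\delta'$ small relative to $\delta$ and $r \geq r_0 = 6^{492^{64}/\delta^{63}}$ large makes the exponent strictly less than $\ex(n,\Fano)\log r$, contradicting $c_{r,F^R}(H)\geq r^{\ex(n,\Fano)}$. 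The precise shape of $r_0$ comes from tracking how the constants $\epsilon,\eta,M_0$ from the regularity and embedding lemmas feed into $\delta'$, and from the $\delta^{-63}$-type loss incurred by iterating the stability exponent $\delta^{1/64}$ in Theorem~\ref{thm:kee1}.

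The main obstacle I expect is the bookkeeping in the second and third paragraphs: namely, showing that a cluster hypergraph arising from an $F^R$-free coloring, once short-listed edges are pruned, is genuinely $\Fano$-free (or nearly so) — this requires a careful ``rainbow Ramsey'' type argument on lists, exploiting that $r$ is so large that any $\Fano$-structure in $\R$ with all lists of size $\geq 7$ admits a rainbow SDR — and then converting the resulting deficiency ``$\R$ is not $\Fano$-extremal'' back into a linear-in-$n^3$ deficiency in the number of freely colorable hyperedges of $H$ via supersaturation. Getting the quantitative dependence tight enough to match the stated $r_0$, rather than merely ``$r$ sufficiently large'', is where the delicate constant-chasing lives.
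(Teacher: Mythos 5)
Your proposal correctly identifies the overall architecture the paper uses: apply the Colored Regularity Lemma, build the multicolored cluster hypergraph $\R$, observe via Lemma~\ref{lem:clusterlemma} that the sub-hypergraph $\R'$ of cluster-edges with lists of size at least $7$ must be $\Fano$-free (so $\sum_{j\ge7}e_j(\R)\le\ex(m,\Fano)$), invoke the quantitative stability Theorem~\ref{thm:kee1}, and finish with an entropy count. So this is the same route as the paper, and the first two paragraphs are essentially right.

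There is, however, a genuine gap in the middle of your third paragraph, and it is precisely where the stated $r_0$ comes from. The trivial bound on colorings is roughly
$6^{\sum_{j\le6}e_j(\R)}\,r^{\sum_{j\ge7}e_j(\R)}$ raised to the $(n/m)^3$ power. Writing $\beta=\beta(\R)=\tfrac{1}{m^3}\bigl(\ex(m,\Fano)-\sum_{j\ge7}e_j(\R)\bigr)$, the factor $r^{\sum_{j\ge7}e_j(\R)(n/m)^3}$ is deficient by $r^{-\beta n^3}$, but the factor $6^{\sum_{j\le6}e_j(\R)(n/m)^3}$ is, a priori, as large as $6^{(\binom{m}{3}-\ex(m,\Fano)+\beta m^3)(n/m)^3}\approx 6^{n^3/24}$, which wipes out the deficiency unless $r$ is astronomically large compared with $1/\beta$. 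Your appeal to ``supersaturation/removal'' does not obviously fix this: supersaturation controls the number of copies of $\Fano$, not the number of cluster-edges with short lists, and the removal lemma would, if anything, discard edges rather than certify that few of them had short lists to begin with. The step you are missing is the paper's sharp refinement that, when $\beta(\R)$ is small, one actually has $\sum_{j\le6}e_j(\R)=O\bigl(\beta(\R)^{1/64}m^3\bigr)$ rather than $\Omega(m^3)$. This is proved not by supersaturation but by a double-count specific to the Fano plane: apply Theorem~\ref{thm:kee1} to $\R'$ to get a near-balanced bipartition $U_1\cup U_2$ of $[m]$; then for every non-crossing $f\in E_{\R}(U_1)\cup E_{\R}(U_2)$, count the $\ge(m/3)^4$ copies of $\Fano$ through $f$ inside the complete bipartite $3$-graph plus $f$ (Claim~\ref{claim:copies}), note each such copy must use an edge absent from $\R''$, and each such absent edge lies in at most $m^4$ Fano planes; since there are only $O(\beta^{1/64}m^3)$ absent edges, $e_{\R}(U_1)+e_{\R}(U_2)=O(\beta^{1/64}m^3)$ follows, and with it the bound on $\sum_{j\le6}e_j(\R)$. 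That bound is what makes the inequality $6^{246\beta^{1/64}}<r^{\beta/2}$ hold for the claimed $r_0$. You also implicitly need the two-regime split on $\beta$ — a coarse count when $\beta$ is not small, and the refined count above when $\beta$ is small but at least $\Theta(h(r\eta)+r\eta)$ — plus the separate ``transfer'' case $\beta<\Theta(h(r\eta)+r\eta)$ where the bipartition of $[m]$ is lifted to a bipartition of $V(H)$ with $e(V_1)+e(V_2)\le\delta n^3$; your sketch blurs these three cases into one, which is where the bookkeeping you flag would otherwise go wrong.
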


\begin{proof}
 Let $0 < \delta \leq 1/36^8$ be given. Let $X=1/432^{64}<1/36^8$. Define
\begin{eqnarray} \label{eq:star7} r_0 = \max\left\{6^{\frac{1}{12X}+2},  6^{\frac{492^{64}}{\delta^{63}}}  \right\}.
\end{eqnarray}
In fact, it is easy to see that the second of the above terms is larger, but we write it in this way to avoid distracting calculations later in the proof. Let $r\geq r_0$. Fix a positive number $\eta$ with
\begin{eqnarray}\label{eqLeta}
\eta < \frac{\delta}{4r} 
\end{eqnarray}
that satisfies
\begin{eqnarray}\label{eqLeta2}
\frac{\delta}{2} \leq 243  (4h(r\eta) + 4r\eta)^{\frac{1}{64}}< \frac{3\delta}{4}.
\end{eqnarray}
For this value of $\eta$, Lemma~\ref{lem:clusterlemma} gives us constants $\eps = \eps(r,3,\eta)$ and $s_0 = s_0(r,3,\eta)$ which we further assume to satisfy
\begin{eqnarray}\label{eqLepsilon}
\eps<\min\{3\eta/2, r\eta/4\}.
\end{eqnarray}
Fix $m_0$ with
\begin{eqnarray}\label{eqLt_0}
m_0\geq\frac{1}{\eps}.
\end{eqnarray}
Moreover, we shall also consider that $m_0$ is sufficiently large to ensure that some of the upcoming equations are satisfied (all such equations will be marked by $m_0\gg1$).

Let $N_0 = N_0(r, 3, m_0, \eps)$ and $M_0 = M_0(r, 3, m_0, \eps)$ be given by Theorem~\ref{theoremcoloredregularity}, and fix $n_1 \geq \max\{N_0, s_0  M_0\}$, such that the equations marked by $n \gg 1$ are satisfied for $n \geq n_1$.

For $F$ being the Fano plane, let $H$ be an $n$-vertex $3$-uniform hypergraph satisfying $ c_{r, F^R}(H)\geq r^{\ex (n, \Fano)}$ and fix one of its $
F^R$-free $r$-colorings. By Theorem~\ref{theoremcoloredregularity} there exists an equitable partition $\V = \{V_1,\ldots,V_m\}$ of $V$, where $m_0\leq m\leq M_0$, that is $\eps$-regular simultaneously with respect to each subhypergraph $H_i=(V,E_i)$, for all $i\in [r]$. Let $\mathcal{R} = \mathcal{R}_H(\mathcal{V},\eta)$
be the multicolored cluster hypergraph with vertex set $[m]$ associated with this partition. For a hyperedge $e$ in $\mathcal{R}$, let $L_e$ be its list of colors, as in Definition~\ref{defmulticolcluster}. Given a subset $\{i_1, i_2, i_3\}$ of $[m]$, we say that a hyperedge $e$ in $\mathcal{R}$ \emph{lies in} a triple $(V_{i_1},  V_{i_2}, V_{i_3})$ if $|e \cap V_{i_j}|=1$ for all $j \in [3]$.

Our aim is to find an upper bound on the number of $F^R$-free $r$-colorings of $H$. To do this, we sum over all possible $\eps$-regular partitions $\cV=\{ V_1, \ldots, V_m\}$ and all multicolored cluster hypergraphs $\mathcal{R}$, and we find an upper bound on the number of $r$-colorings (of sub-hypergraphs of $H$) for which $\cV$ is an $\eps$-regular partition (with respect to all colors) associated with the multicolored cluster hypergraph $\mathcal{R}$. 

Fix $\cV$ and $\mathcal{R}$. In the following, we assume that $m$ divides $n$ to avoid technicalities, but the same conclusion holds for other values of $m$ and $n$. There are at most $r \eps\binom{m}{3}$ subsets $\{i_1, i_2, i_3\}$ of $[m]$ for which the triple $(V_{i_1},  V_{i_2}, V_{i_3})$ is not $\eps$-regular with respect to the partition $\cV=\{ V_1, \ldots, V_m\}$ for at least one of the colors. At most
\begin{eqnarray}\label{eqL1}
r\eps\binom{m}{3}\left(\frac{n}{m}\right)^3 \leq r\eps\frac{n^3}{6}
\end{eqnarray}
hyperedges lie in a triple of this type. Because (\ref{eqLt_0}) holds, there are at most
\begin{eqnarray}\label{eqL2}
m\left(\frac{n}{m}\right)^2n\leq\eps n^3
\end{eqnarray}
hyperedges with at least two elements in a same class with respect to $\cV$. Next, consider hyperedges $e$ that lie in an $\eps$-regular triple $(V_{i_1}, V_{i_2} , V_{i_3})$ for which the set of hyperedges with the same color as $e$ has density less than $\eta$ with respect to $(V_{i_1},  V_{i_2}, V_{i_3})$. The number of such hyperedges is bounded above by
\begin{eqnarray}\label{eqL3}
r\eta\binom{m}{3}\left(\frac{n}{m}\right)^3 \leq r\eta\frac{n^3}{6}.
\end{eqnarray}

Using (\ref{eqL1}), (\ref{eqL2}), (\ref{eqL3}) and (\ref{eqLepsilon}),
we conclude that there are at most
$$
\left(\frac{r\eps}{6}+\eps+\frac{r\eta}{6}\right)n^3
\stackrel{(\ref{eqLepsilon})}{<}
r\eta n^3
$$
hyperedges in $H$ that do not lie in an $\eps$-regular triple $(V_{i_1},  V_{i_2}, V_{i_3})$ for which their color is dense. There are at most $\binom{n^3}{r \eta n^3}$ ways to fix these hyperedges in $H$, and they can be colored in at most $r^{r\eta n^3}$ ways. The remaining hyperedges of $H$ can be colored in at most
$$\left(\prod_{e\in E(\mathcal{R})}|L_e|\right)^{\left(\frac{n}{m}\right)^3}$$
ways. Thus, the total number of $r$-colorings of $H$ that give rise to the partition $\cV=\{ V_1, \ldots, V_m\}$ and the multicolored cluster hypergraph $\mathcal{R}$ is bounded above by
$$
\binom{n^3}{r\eta n^3}r^{r\eta n^3} \cdot
\left(\prod_{e\in E(\mathcal{R})}|L_e|\right)^{\left(\frac{n}{m}\right)^3}.
$$

Let $e_j(\mathcal{R}) = |\{e\in \mathcal{R}:|L_e|=j\}|$, $j \in [r]$. We write a sum over a set $\P$  of pairs  to denote a sum over all equitable partitions $\cV$ and all possible multicolored cluster hypergraphs $\mathcal{R}$ associated with $\cV$. We have
\begin{eqnarray}
c_{r, F^R}(H)
&\leq&
\sum_{(\cV,\mathcal{R}) \in  \P}
\binom{n^3}{r\eta n^3} r^{r\eta n^3}
\left(\prod_{e\in E(\mathcal{R})}|L_e|\right)^{\left(\frac{n}{m}\right)^3}\nonumber\\
&\stackrel{(\ref{eq:entropy1})}{\leq}&
\sum_{(\cV,\mathcal{R}) \in \P}
2^{h(r\eta)n^3}r^{r\eta n^3}
\left(\prod_{j=1}^{6}j^{e_j(\mathcal{R})}\prod_{j=7}^{r}j^{e_j(\mathcal{R})}\right)^{\left(\frac{n}{m}\right)^3}\nonumber\\
&\leq&
\sum_{(\cV,\mathcal{R}) \in \P}
2^{h(r\eta)n^3}r^{r\eta n^3}
\left(
6^{\sum_{j=1}^{6}e_j(\mathcal{R})}
r^{\sum_{j=7}^{r}e_j(\mathcal{R})}\right)^{\left(\frac{n}{m}\right)^3}. \label{eqLL}
\end{eqnarray}

Assume that there is a copy of the Fano plane $F$ in $\mathcal{R}$ so that every hyperedge $e$ in this copy satisfies $|L_e|\geq 7$. Then we may greedily assign a color from each list to produce a rainbow coloring of $F$, which would lead to a rainbow copy of $F$ in the original coloring because of Lemma~\ref{lem:clusterlemma}. As there is no such copy, we must have
$$
\sum_{j=7}^{r}e_j(\mathcal{R}) \leq \ex(m,\Fano).
$$

For a multicolored cluster hypergraph $\mathcal{R}=\mathcal{R}(\eta)$, let
$$\beta(\mathcal{R})=\frac{1}{m^3} \left(\ex(m,\Fano) -  \sum_{j=7}^{r}e_j(\mathcal{R})\right).$$

We consider two cases.
\begin{itemize}
\item[(a)] For every multicolored cluster hypergraph $\mathcal{R}$, we have $\beta(\mathcal{R}) \geq 4h(r\eta) + 4r\eta$.


\item[(b)] There exists a multicolored cluster hypergraph $\mathcal{R}$ for which $\beta(\mathcal{R}) < 4h(r\eta) + 4r\eta$.
\end{itemize}

\vspace{3pt}

\noindent \emph{Case (a)} Our aim is to show that this case cannot apply by proving that the number of $F^R$-free $r$-colorings of $H$ is less than $r^{\ex(n,\Fano)}$. We first consider all those terms $(\cV,\mathcal{R}) \in \P$ in the sum (\ref{eqLL}) where the multicolored cluster hypergraph 
$\mathcal{R}$ satisfies $\beta(\mathcal{R}) \geq X$. We denote the set of all these terms by $\P^\ast$. This leads to the upper bound
\begin{eqnarray}\label{eq:casea}
&&
\sum_{(\cV,\mathcal{R}) \in\P^\ast}
2^{h(r\eta)n^3}r^{r\eta n^3}
\left(
6^{\binom{m}{3}-(\ex(m,\Fano)-\beta(\mathcal{R}) m^3)}
r^{\ex(m,\Fano)-\beta(\mathcal{R}) m^3}\right)^{\left(\frac{n}{m}\right)^3}
\nonumber\\
&\leq&
\sum_{(\cV,\mathcal{R}) \in\P^\ast}
2^{h(r\eta)n^3}r^{r\eta n^3}
\left(
\frac{
6^{\frac{m^3}{24} +\beta(\mathcal{R}) m^3}
}
{
r^{\frac{\beta(\mathcal{R})}{2} m^3}
}
r^{\ex(m,\Fano)-\frac{\beta(\mathcal{R})}{2} m^3}
\right)^{\left(\frac{n}{m}\right)^3}.
\end{eqnarray}
However, as $r \geq r_0 \geq 6^{1/{(12X)}+2}$ by (\ref{eq:star7}), we have
\begin{eqnarray}
&&
\left(
\frac{
6^{\frac{1}{24}+\beta(\mathcal{R})}
}
{
r^{\frac{\beta(\mathcal{R})}{2}}
}
\right)^{m^3}
\leq
\left(
\frac{
6^{\frac{1}{24}+\beta(\mathcal{R})}
}
{
\left(6^{\frac{1}{12X}+2}\right)^{\frac{\beta(\mathcal{R})}{2}}
}
\right)^{m^3}
=
\left(
\frac{
6^{\frac{1}{24}}
}
{
6^{\frac{\beta(\mathcal{R})}{24X}}
}
\cdot \frac
{
6^{\beta(\mathcal{R})}
}
{
6^{\beta(\mathcal{R})}
}
\right)^{m^3}
\nonumber
\stackrel{(\beta(\mathcal{R})\geq X)}{\leq}
1.
\nonumber
\end{eqnarray}
Then, (\ref{eq:casea}) is at most
\begin{eqnarray}\label{eq:caseaa}
&&\sum_{(\cV,\mathcal{R}) \in\P^\ast}
2^{h(r\eta)n^3}r^{r\eta n^3}
r^{\ex(n,\Fano)-\frac{\beta(\mathcal{R})}{2} n^3}.
\end{eqnarray}

Since the number of classes is $m\leq M_0$, we have at most $M_0^n$ partitions $\cV=\{ V_1, \ldots, V_m\}$ and at most $2^{rM_0^3}$ multicolored cluster hypergraphs. Moreover, as we are in  case~(a) where $\beta(\mathcal{R}) \geq 4h(r\eta) + 4r\eta$,   the expression in~\eqref{eq:caseaa} is at most
\begin{eqnarray}\label{eq:casea1}
\nonumber M_0^n \cdot 2^{rM_0^3} \cdot
2^{h(r\eta)n^3} \cdot r^{r\eta n^3} 
r^{\ex(n,\Fano)- 2h(r\eta)n^3 -2r\eta n^3}
&\stackrel{n \gg 1}{\leq}&
 r^{-(h(r\eta) + r\eta) n^3} r^{\ex(n,\Fano)} \\
&<& \frac{1}{2} r^{\ex(n,\Fano)}.
\end{eqnarray}

Now we consider those terms $(\cV,\mathcal{R})$ in the sum (\ref{eqLL}) for which 
$\mathcal{R}$ satisfies $4h(r\eta) + 4r\eta \leq \beta(\mathcal{R}) \leq X$.
Fix one such pair $(\cV,\mathcal{R})$ and let $\mathcal{R}'$ be the hypergraph obtained from $\mathcal{R}$ by deleting all hyperedges $e$ satisfying $|L_e|\leq 6$. Since
$
(\ex(m,\Fano) - \sum_{j=7}^{r}e_j(\mathcal{R}))
=
\beta(\mathcal{R}) m^3$, and $\beta (\mathcal{R})\leq X=1/432^{64} <1/36^8$, 
and since there is no copy of $F^R$ in $\mathcal{R}'$, Theorem~\ref{thm:kee1} produces a partition $U_1\cup U_2$ of $V(\R')=[m]$ with respect to which 
\begin{eqnarray}\label{eq:caseb}
 e_{\R'}(U_1) +  e_{\R'}(U_2)
&\leq&
 2\beta(\mathcal{R})^{\frac{1}{64}}  m^3.
\end{eqnarray}

Let $\mathcal{R}''$ be the  subhypergraph of $\mathcal{R}'$ obtained by removing all hyperedges  in  $ E_{\R'}(U_1) \cup  E_{\R'}(U_2)$,  thus
\begin{eqnarray} \label{eq:101}
e(\mathcal{R}'') \geq \ex(m,\Fano) - \beta(\mathcal{R})m^3 - 
 2\beta(\mathcal{R})^{\frac{1}{64}}  m^3.
 \end{eqnarray}
Let $K=K(U_1,U_2)$ be the complete bipartite $3$-uniform hypergraph with partition $[m]= U_1\cup U_2$. 

For every hyperedge $f \in  E_{\R}(U_1) \cup  E_{\R}(U_2)$,  let $\mathcal{C}(f)$ be the set of all copies of a Fano plane $F$ in $K+f$. Let $C = \min\{|\mathcal{C}(f)|: f\in E_{\R}(U_1) \cup  E_{\R}(U_2)\}$. 

On the one hand we have
\begin{equation}\label{eq:lb}
C \cdot  | E_{\R}(U_1) \cup  E_{\R}(U_2)| \leq\sum_{f \in  E_{\R}(U_1) \cup  E_{\R}(U_2)} \sum_{F \in \mathcal{C}(f)} 1. 
\end{equation}
On the other hand, consider $\overline{E} = E(K)\setminus E(\mathcal{R}'')$ which satisfies 
\begin{eqnarray}\label{bound_over_E}
|\overline{E}| \leq |E(B_{m})| - |E(\mathcal{R}'')| &\stackrel{(\ref{eq:101})}{\leq}& \ex(m,\Fano)-\left(\ex(m,\Fano)-\beta(\mathcal{R}) m^3-  2\beta(\mathcal{R})^{\frac{1}{64}} m^3 \right)  \nonumber \\
&=& \beta(\mathcal{R}) m^3 +  2\beta(\mathcal{R})^{\frac{1}{64}} m^3  <   3 \beta(\mathcal{R})^{\frac{1}{64}} m^3.
\end{eqnarray} 
We claim that for every hyperedge $f\in  E_{\R}(U_1) \cup  E_{\R}(U_2)$ and $F \in \mathcal{C}(f)$, the Fano plane $F$ contains a hyperedge in $\overline{E}$. Assuming this, and using the fact that every hyperedge in $\overline{E}$ belongs to at most $m^{4}$ copies of $F$ in the complete $k$-uniform hypergraph with vertex set $[m]$, we obtain
\begin{equation}\label{eq:ub}
\sum_{f \in  E_{\R}(U_1) \cup  E_{\R}(U_2)} \sum_{F \in \mathcal{C}(f)} 1 \leq \sum_{g \in \overline{E}} \sum_{\stackrel{F\in \bigcup \mathcal{C}(f)}{g \in F}} 1
\leq m^{4} |\overline{E}|  \stackrel{(\ref{bound_over_E})}{<}   3\beta(\mathcal{R})^{\frac{1}{64}}   m^7.
\end{equation}
In this equation, $F\in \bigcup \mathcal{C}(f)$ denotes $F \in \bigcup_{f \in  E_{\R}(U_1) \cup  E_{\R}(U_2)} \mathcal{C}(f)$. Combining~\eqref{eq:ub} with \eqref{eq:lb}, we derive
 \begin{eqnarray}\label{eq:aux}
 e_{\R}(U_1) +  e_{\R}(U_2) 
\leq \frac{   3\beta(\mathcal{R})^{\frac{1}{64}}   m^7}{C}.
\end{eqnarray}

Before finding a lower bound on $C$, we show that, for every $f\in E_{\R}(U_1) \cup  E_{\R}(U_2)$ and $F \in \mathcal{C}(f)$, the Fano plane $F$ indeed contains a hyperedge in $\overline{E}$. Given a hyperedge $f\in E_{\R}(U_1) \cup  E_{\R}(U_2)$ and $F \in \mathcal{C}(f)$, we know that $E(F)\setminus\{f\}\subseteq E(K)$. Moreover, because $f \in E(\R)$, we know that its list has at least one color. From this we derive that $E(F)\setminus\{f\}\not\subseteq E(\R'')$, as otherwise the hyperedges of $E(F) \setminus \{f\}$ would have at least seven colors in their list of colors, which would lead to a rainbow copy of $F$ in $\R$ where every hyperedge is assigned a color of its own list. Lemma~\ref{lem:clusterlemma} produces the desired contradiction.

\begin{claim}\label{claim:copies}
Given a hyperedge $f\in E_{\R}(U_1) \cup  E_{\R}(U_2)$, we have 
$
|\mathcal{C}(f)|
\geq
\left(m/3\right)^{4}.
$
\end{claim}

\begin{proof}
Given a hyperedge $f \in E_{\R}(U_1) \cup  E_{\R}(U_2)$, say  $f \in E_{\R}(U_1)$ with vertices $v_1, v_2, v_3$, we may create copies of a Fano plane $F$ in $K+f$ as follows.  Fix four vertices $v_4, \ldots , v_7$ in  $U_2$ or one vertex $v_4$ in $U_1$ and three vertices $v_5, v_6, v_7$ in $U_2$, and choose six hyperedges $e_1, \ldots , e_6$ on the  seven vertices to form a Fano plane. The number of ways to do this is exactly
$$ 2 \binom{4}{2} \binom{|U_2|}{4} +  2 \binom{4}{2} (|U_1| - 3) \binom{|U_2|}{3}
\geq
u^{4},$$
where $u=\min\{|U_i| \colon i \in [2]\}$.

By Lemma~\ref{lem:sizes}  with (\ref{bound_over_E}) applied to $\mathcal{R}''$, we must have
\begin{eqnarray}\label{eq:ui}
u \geq \frac{m}{2} - 2 \sqrt{ 
  3\beta(\mathcal{R})^{\frac{1}{64}}}m.
\end{eqnarray}
Then,
\begin{eqnarray*}
|\mathcal{C}(f)|
&\geq& \left( \frac{m}{2} - 2 \sqrt{ 
 3\beta(\mathcal{R})^{\frac{1}{64}}} m \right)^4
\stackrel{(\beta(\mathcal{R})\leq \left(\frac{1}{432}\right)^{64})}{\geq} \left( \frac{m}{3} \right)^4.
\end{eqnarray*}
\end{proof}

By Claim~\ref{claim:copies} and (\ref{eq:aux}), we have
\begin{eqnarray}\label{bound_EN}
 e_{\R}(U_1) +  e_{\R}(U_2) 
\leq 243 \beta(\mathcal{R})^{\frac{1}{64} } m^3.
\end{eqnarray}
Thus,
\begin{eqnarray}\label{eq:fewcolors}
\sum_{j=1}^{6}e_j(\mathcal{R})
&\leq&  243 \beta(\mathcal{R})^{\frac{1}{64}} m^3 +  3 \beta(\mathcal{R})^{\frac{1}{64} }  m^3 
\leq 246 \beta(\mathcal{R})^{\frac{1}{64} }   m^3.
\end{eqnarray}
Consider all those terms $(\cV,\mathcal{R})$ in the sum (\ref{eqLL}) where the multicolored cluster hypergraph 
$\mathcal{R}$ satisfies $4h(r\eta) + 4r\eta \leq \beta(\mathcal{R}) \leq  X \leq 1/24^{64}$. Denoting the set of all these terms by $\P^{\ast \ast}$, we have
\begin{eqnarray} \label{eq:klm3}
&&
\sum_{(\cV,\mathcal{R}) \in \P^{\ast \ast}} 2^{h(r\eta)n^3}r^{r\eta n^3} \left(6^{\sum_{j=1}^{6}e_j(\mathcal{R})} r^{\ex(m,\Fano)-\beta(\mathcal{R}) m^3}\right)^{\left(\frac{n}{m}\right)^3} \nonumber
\\
&\leq&
M_0^n \cdot 2^{rM_0^3} \cdot 2^{h(r\eta)n^3} \cdot r^{r\eta n^3}
\left(
6^{246 \beta(\mathcal{R})^{\frac{1}{64}}   m^3}
r^{\ex(m,\Fano)-\beta(\mathcal{R}) m^3}\right)^{\left(\frac{n}{m}\right)^3} \nonumber
\\
&\stackrel{(n \gg 1)}{<}&
\frac{1}{2} r^{h(r\eta)n^3 + r\eta n^3} 
r^{\ex(n,\Fano)-\frac{\beta(\mathcal{R})}{2} n^{3}} 
\end{eqnarray}
provided that
\begin{equation}\label{eq_carlos1}
6^{246 \beta(\mathcal{R})^{\frac{1}{64}}} <r^{\frac{\beta(\mathcal{R})}{2}}.
\end{equation}

This holds if
$
6^{492}< r^{\beta (\mathcal{R})^{63/64}}$.
Since $\beta (\mathcal{R})\geq 4h(r\eta) + 4r\eta$, and by the lower bound in (\ref{eqLeta2}), 
namely
$$
 \left( \frac{\delta}{492} \right)^{64} \leq 4h(r\eta) + 4r\eta \leq \beta(\mathcal{R}),
$$ 
inequality~{\eqref{eq_carlos1}} holds if
$$
6^{492} < r^{\left(\frac{\delta}{492}\right)^{63}} \Longrightarrow 6^{\frac{492^{64}}{\delta^{63}}} < r,$$
which is precisely our assumption.

For $\beta (\mathcal{R})\geq 4h(r\eta) + 4r\eta$  inequality~(\ref{eq:klm3}) yields 
\begin{eqnarray} \label{eq:klm4}
&&
r^{- (h(r\eta) + r\eta) n^3} \cdot
r^{\ex(n,\Fano)}
<
\frac{1}{2} \cdot
r^{\ex(n,\Fano)}.
\label{eq:caseb1}
\end{eqnarray}
Combining (\ref{eq:casea1}) and (\ref{eq:klm4}), we have less than $r^{ex(n,F)}$ distinct $F^R$-free $r$-colorings when case (a) applies, so that it cannot happen.

\vspace{3 pt}

\noindent \emph{Case (b)} In this case, there is a partition $\cV$ of the vertex set of $H=(V,E)$ with $m$ classes that is associated with a multicolored cluster hypergraph $\mathcal{R}$ for which $\beta(\R) < 4h(r\eta) + 4r\eta$. Again, consider $\R'$ obtained from $\R$ by removing all hyperedges with less than six colors in their list of colors. As in case (a), Theorem~\ref{thm:kee1} tells us that $\R'$ admits a partition $\U = \{U_1,  U_2\}$ satisfying
$$e_{\mathcal{R}'}(U_1) + e_{\mathcal{R}'}(U_2) \leq 2 \beta(\mathcal{R})^{\frac{1}{64}} m^3.
 $$

Consider the partition $\mathcal{W} = \{W_1, W_2\}$ of the vertex set of $H$ where, for all $i\in[2]$,
$W_i = \bigcup_{j\in U_i}V_j.$ We want to find an upper bound on the cardinality of the set $E_{H}(W_1) \cup E_{H}(W_2)$ with respect to $\W$. Such  hyperedges are either one of the hyperedges counted by equations $\eqref{eqL1}, \eqref{eqL2}$ and $\eqref{eqL3}$ or are associated with a non-crossing hyperedge of $\mathcal{R}$ with respect to $\mathcal{U}$. Clearly, each non-crossing hyperedge of $\mathcal{R}$ with respect to $\mathcal{U}$ generates at most $(n/m)^3$ non-crossing hyperedges of $H$ with respect to $\mathcal{W}$. Using the upper bound on $ e_{\R}(U_1) +  e_{\R}(U_2)$ given in~\eqref{bound_EN}, we conclude that
\begin{eqnarray}\label{eq1}
e_{H}(W_1) +e_{H}(W_2)
&\leq&
r \eta n^3 +  \left(\frac{n}{m}\right)^{3} \cdot ( e_{\R}(U_1) +  e_{\R}(U_2)) \nonumber \\
&\leq&
\left(
r\eta
+ 
243 \beta({\mathcal R})^{\frac{1}{64}}  
\right)n^3
.
\end{eqnarray}
For $\beta({\mathcal R})<4h(r\eta) + 4r\eta$ expression~\eqref{eq1} is at most
\begin{eqnarray*}
&&
\left(
r\eta
+ 
 243  (4h(r\eta) + 4r\eta)^{\frac{1}{64}}
\right)n^3
\stackrel{(\ref{eqLeta}),(\ref{eqLeta2})}{\leq}
\left(
\frac{\delta}{4}
+ 
\frac{3\delta}{4}\right)n^3=\delta n^3
\end{eqnarray*}
as desired.
\end{proof}

\section{Proof of Theorem~\ref{thm:main:fano}} \label{sec:embedding}

In this section, we use the stability result of the previous section to prove that Theorem~\ref{thm:main:fano} holds. 

\begin{proof}[Proof of Theorem~\ref{thm:main:fano}]

Let $\gamma$, $\xi = \xi(\gamma)$, $\delta = \delta(\xi)$, $r_0 = r_0(\delta)$ and $r \geq  r_0$ be positive numbers satisfying
\begin{eqnarray} \label{eq:41a}
&&
\gamma\leq \frac{1}{1406}
\end{eqnarray}
\begin{eqnarray} \label{eq:41b}
&&
\xi\leq \frac{\gamma^3}{16}  
\end{eqnarray}
\begin{eqnarray} \label{eq:41c}
&&
\delta<\min\left\{\frac{1}{400^2}, \frac{\gamma^2}{4\cdot9^2}, \frac{\xi^3}{36}\right\}
\end{eqnarray}
\begin{eqnarray} \label{eq:41d}
&&
r>\max\{r_0, 21^{64}\},
\end{eqnarray}
hence we may have equality in~\eqref{eq:41a} and~\eqref{eq:41b} and fix
$$
\delta = \frac{1}{37 \cdot 16^3 \cdot 1406^9}, 
$$
and thus  
$$
r_0 = r_0(\delta)=6^{492^{64}\cdot (37\cdot 16^3\cdot 1406^9)^{63}} 
$$ comes from Lemma~\ref{lem:est:col:FK_fano}.


Let $n_1 = n_1(r,\delta)$ defined in Lemma~\ref{lem:est:col:FK_fano} for fixed $r$ and set $n_0 \geq n_1+3 \binom{n_1}{3}$. Consider $n\geq n_0$ large enough to ensure that (\ref{eq:nlarge1}), (\ref{eq:nlarge5}), (\ref{eq:nlarge2}), (\ref{eq:nlarge7}), (\ref{eq:nlarge6}) and (\ref{eq:nlarge3}) hold. Assume that we are given a hypergraph $H$ on $n\geq n_0$ vertices and with at least $r^{\ex(n,\Fano)+m}$ distinct $F^R$ colorings, for some $m\geq 0$.

\begin{claim}\label{claim:main}
If $H$ is a hypergraph with at least $r^{\ex(n,\Fano)+m}$ distinct $F^R$-free $r$-colorings, for some $m\geq 0$, and $H\neq B_n$, then there exists an induced sub-hypergraph $H^\prime$ on $n^\prime\geq n-3$ vertices and at least $r^{\ex(n^\prime,\Fano)+m+1}$ distinct $F^R$-free $r$-colorings.
\end{claim}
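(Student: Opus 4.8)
The plan is to run the Alon--Balogh--Keevash--Sudakov amplification step: assuming $H \neq B_n$ has at least $r^{\ex(n,\Fano)+m}$ distinct $F^R$-free $r$-colorings, we will remove a small number of vertices to obtain an induced sub-hypergraph $H'$ on $n' \geq n-3$ vertices whose count of $F^R$-free colorings still exceeds $r^{\ex(n',\Fano)+m+1}$. First I would apply Lemma~\ref{lem:est:col:FK_fano} (with the fixed $\delta$) to obtain a partition $\cV = \{V_1, V_2\}$ of $V(H)$ with $e(V_1)+e(V_2) \leq \delta n^3$; after reindexing we may assume $|V_1| \le |V_2|$. The key quantity to control is $|E_C(H)|$, the number of crossing hyperedges with respect to $\cV$: since every $F^R$-free coloring assigns each hyperedge one of $r$ colors, we trivially have $c_{r,F^R}(H) \le r^{|E(H)|}$, so $|E(H)| \ge \ex(n,\Fano)+m$, and combining with $|E_N(H)| = e(V_1)+e(V_2) \le \delta n^3$ gives $|E_C(H)| \ge \ex(n,\Fano) + m - \delta n^3 \ge |E(B_n)| - \delta n^3$ (using the lower bound in~\eqref{eq|E(B_n)|} and $m \ge 0$). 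Lemma~\ref{lem:sizes} then forces both $|V_1|$ and $|V_2|$ to lie within $2n\sqrt{\delta}$ of $n/2$, so the partition is nearly balanced; moreover $|E_C(H)| \le |E(B_n)|$ automatically, so $m - \delta n^3 \le 0$ would be too strong --- rather, $m \le |E_C(H)| - \ex(n,\Fano) + \delta n^3 + |E_N(H)|$, and in any case $m = O(\delta n^3)$, a bound we will need later.

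The heart of the argument is a dichotomy based on how close $H$ is to $B_n$. Since $H \ne B_n$, either (i) $H$ is missing at least one crossing hyperedge that $B_n$ contains (equivalently the complete crossing pattern on $(V_1,V_2)$ is not a subgraph of $H$), or (ii) $H$ contains at least one non-crossing hyperedge, i.e.\ $e(V_1)+e(V_2) \ge 1$. (In fact if $H$ has exactly the edge set of the complete bipartite hypergraph on a balanced partition then $H \cong B_n$, so at least one of these holds.) In case (ii), pick a non-crossing hyperedge $g$, say $g \subseteq V_1$, and let $v \in g$; I would delete $v$ and two further carefully chosen vertices. Deleting a vertex $v$ from the smaller class (or from whichever class keeps the partition balanced) destroys at most $\binom{n-1}{2} \approx \ex(n,\Fano) - \ex(n-1,\Fano)$ hyperedges if $v$ were an ordinary vertex of $B_n$, but here we want to show we destroy \emph{strictly fewer} crossing-type hyperedges while also removing the ``wasted'' non-crossing hyperedge --- the point being that the degree of $v$ in $H$ is at most its degree in $B_n$ plus the non-crossing hyperedges through $v$, and we gain because the missing crossing hyperedges (in case (i)) or the extra colorings available don't keep pace. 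The cleanest way to quantify this: the number of $F^R$-free colorings of $H$ is at most (number of colorings of hyperedges through the deleted vertices) times $c_{r,F^R}(H')$, and the number of hyperedges through three deleted vertices is at most $3\binom{n}{2}$, so $c_{r,F^R}(H') \ge r^{\ex(n,\Fano)+m - 3\binom{n}{2}}$; we then need $\ex(n,\Fano) - 3\binom{n}{2} \ge \ex(n',\Fano) + 1$, i.e.\ $\ex(n,\Fano) - \ex(n-3,\Fano) \le 3\binom{n}{2} - 1$, which holds for large $n$ by~\eqref{eq|E(B_n)|} since $\ex(n,\Fano)-\ex(n-3,\Fano) = \frac{3n^2}{8} + O(n)$ while $3\binom{n}{2} = \frac{3n^2}{2}+O(n)$ --- but wait, that crude bound loses too much, so instead I must pick the three vertices to delete inside the smaller class and use that a vertex of $B_n$ in a class of size $n/2$ has degree $\approx \frac{n^2}{8}$, giving $\ex(n,\Fano) - \ex(n-3,\Fano) \approx \frac{3n^2}{8}$, and then show the deleted vertices in $H$ have degree not much larger. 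This is where the stability estimate $e(V_1)+e(V_2) \le \delta n^3$ and the small value of $\delta$ are essential: a typical vertex has only $O(\delta n^2 / n) \cdot n = O(\delta n^2)$ incident non-crossing hyperedges, so a counting/averaging argument produces three vertices (one of them in the hyperedge $g$, or one a vertex witnessing a missing crossing edge) whose total $H$-degree is at most $\ex(n,\Fano) - \ex(n-3,\Fano) - 1 + (\text{negligible})$, and here we absorb $m$ using $m = O(\delta n^3)$ --- actually $m$ does not enter the degree bound, it just rides along additively.

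The main obstacle I anticipate is making the vertex-deletion bookkeeping tight enough: a naive ``delete any three vertices, lose $3\binom{n}{2}$ hyperedges'' estimate is far too lossy to beat $\ex(n',\Fano)$, so one genuinely needs that in the nearly-balanced-bipartite regime the vertices being removed have degree close to the $B_n$-degree $\approx n^2/8$ rather than the trivial $\binom{n-1}{2} \approx n^2/2$. The way around this is to delete vertices \emph{from the smaller side} and to exploit case (i)/(ii): in case (ii) removing $g \subseteq V_1$ already frees up one ``unit'' of slack (a hyperedge present in $H$ but not contributing to the extremal count), and in case (i) the missing crossing hyperedge means $|E_C(H)| \le |E(B_n)| - 1$, so $m \le |E_N(H)| - 1 + \delta n^3$, giving extra room; combined with the fact that the sub-hypergraph $H'$ on $n' = n-3$ vertices still satisfies $c_{r,F^R}(H') \ge c_{r,F^R}(H) / r^{D}$ where $D$ is the number of hyperedges meeting $\{$deleted vertices$\}$, and $D \le \ex(n,\Fano) - \ex(n',\Fano) - 1$ by the careful choice, we get the desired $c_{r,F^R}(H') \ge r^{\ex(n',\Fano)+m+1}$. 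The inequalities~\eqref{eq:41a}--\eqref{eq:41d} on $\gamma, \xi, \delta$ are presumably calibrated exactly so that all these estimates close with room to spare, and the references to equations ``$n \gg 1$'' such as (\ref{eq:nlarge1})--(\ref{eq:nlarge3}) collect the finitely many large-$n$ requirements; I would state each numeric inequality as it arises and verify it against the chosen value of $\delta$ rather than grinding through them here.
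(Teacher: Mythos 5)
The proposal has a genuine gap: it rests entirely on the trivial bound $c_{r,F^R}(H') \geq c_{r,F^R}(H)/r^{D}$, where $D$ is the number of hyperedges meeting the deleted vertices, and then tries to make $D$ small by a clever choice of vertices. But this cannot close the argument in the crucial regime where $\delta_1(H) \geq \delta_1(B_n)$. The paper does use exactly your kind of deletion step in one easy sub-case --- when $\delta_1(H) < \delta_1(B_n)$, deleting a minimum-degree vertex gives $D = \delta_1(H) \leq \delta_1(B_n)-1 = |E(B_n)|-|E(B_{n-1})|-1$ and hence the $+1$. But once $\delta_1(H) \geq \delta_1(B_n)$, every vertex has degree at least $\delta_1(B_n)$, so no choice of three vertices can yield $D \leq \ex(n,\Fano)-\ex(n',\Fano)-1$; the trivial quotient bound then only gives $c_{r,F^R}(H') \geq r^{\ex(n',\Fano)+m}$, exactly one short. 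Incidentally, the degree bookkeeping in the proposal is also off: $\delta_1(B_n) \approx 3n^2/8$, not $n^2/8$, and $\ex(n,\Fano)-\ex(n-3,\Fano) \approx 9n^2/8$, not $3n^2/8$, which further hides that the trivial bound is tight rather than slack.

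The missing idea is that the gain must come from the \emph{coloring} constraint, not from edge counting. In an $F^R$-free coloring, the hyperedges incident to the deleted vertices cannot be colored in all $r^{D}$ ways, because many seven-tuples among them and the surviving crossing hyperedges would form rainbow Fano planes. This is the engine of the paper's proof. In Case 2 (every vertex has $\leq \gamma n^2$ same-class link edges, and a non-crossing hyperedge $e=\{v_1,v_2,v_3\}$ exists), the paper applies Tur\'an's theorem in the graph $\L = (\bigcap_i L(v_i)) \cap \binom{X}{2}$ to extract $q = \Omega(n^2)$ edge-disjoint copies of $K_4$; each such $K_4$ together with $e$ spans a Fano plane in $H$, forcing its six non-$e$ hyperedges to be colored in at most $21r^5$ ways rather than $r^6$. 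The resulting factor $(21/r)^q$ is an enormous multiplicative saving that supplies the $+1$ (and much more). Case 1 likewise gets the saving not from degree control but from the $\mathcal{C}_1/\mathcal{C}_2$ dichotomy on abundant versus rare colors. Your case split on missing crossing edges versus extra non-crossing edges is in the right spirit but cannot substitute for this: you need to argue that the $F^R$-free condition depresses the count of colorings of the boundary hyperedges below $r^D$, which requires something like the embedding of many Fano planes through the deleted vertices.
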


If Claim~\ref{claim:main} is true, we inductively arrive at some sub-hypergraph $H_{1}$ with $n'\geq n_1$ vertices that allows more than $r^{\binom{n'}{3}}$ feasible colorings, which is impossible and yields the desired contradiction.

To prove Claim~\ref{claim:main}, let $H=(V,E)\neq B_n$ be a $3$-uniform hypergraph on $n$ vertices and with at least $r^{\ex(n,\Fano)+m}$ feasible colorings, with $m\geq 0$. As $B_n$ can be colored arbitrarily without producing a rainbow Fano plane, we have that $|E(H)|\geq |E(B_n)|$. Let $\delta_1(H)$ be the minimum degree of $H$. If $\delta_1(H) < \delta_1(B_n)$, let $v$ be a vertex of minimum degree in $H$ and consider 
the sub-hypergraph $H^\prime = H-v$. Since
$$|E(B_{n-1})| = |E(B_n)| - \delta_1(B_n)\leq |E(B_n)|-\delta_1(H)-1,$$
we conclude that the number of $(F, P)$-free $r$-colorings of $H^\prime$ is at least
$$\frac{r^{|E(B_n)|+m}}{r^{\delta_1(H)}} = r^{|E(B_n)|+m-\delta_1(H)} \geq r^{|E(B_{n-1})|+m+1},$$
as desired. So let us assume that $\delta_1(H)\geq\delta_1(B_n)\geq 3n^2/8-n$. 

Consider a partition of $\V= \{X, Y\}$ of $V$ that minimizes $e_H(X) + e_H(Y)$. Let $E_C(H)$ and $E_N(H)$ denote the number of crossing hyperedges and non-crossing hyperedges in $H$ with respect to $\mathcal{V}$, respectively. By Lemma~\ref{lem:est:col:FK_fano}, we have $e_H(X) + e_H(Y)\leq\delta n^3$, and hence
\begin{eqnarray}\label{eqstar1}
|E(H)|\leq|E(B_n)|+\delta n^3.
\end{eqnarray}
It follows from $|E(H)|\geq |E(B_n)|$ that
\begin{eqnarray*}
&&|E_C(H)| \geq |E(B_n)|-\delta n^3,
\end{eqnarray*}
thus by Lemma~\ref{lem:sizes} we have
\begin{eqnarray} \label{eq:sizes2}
 n/2 - 2 \sqrt{\delta} n \leq \min\{|X|, |Y|\} \leq \max\{|X|, |Y|\} \leq n/2 + 2 \sqrt{\delta} n
 \end{eqnarray}
 
For a vertex $v$ of $H$, define its link graph $\L(v)$ with vertex set $V(H)-v$ and edge set $L(v) = \{\{u, w\}:\{v, u, w\}\in E(H)\}$. Every coloring of the hyperedges incident with $v$ naturally induces a coloring of $L(v)$, so that when we have a coloring of the hyperedges incident with $v$, we may view it as an edge-coloring of $\mathcal{L}(v)$. Given an $r$-coloring $\Delta$ of $H$, we say that a color $\alpha$ is \emph{abundant} with respect to a vertex $v$ and a class $Z\in\{X, Y\}$ when the set of hyperedges $e$ of color $\alpha$ such that $v \in e$ and $e-v \subseteq Z$ generate a matching of size at least ${\xi n}$ in  $\mathcal{L}(v)$, otherwise the color is called \emph{rare}.

Our argument is divided into two cases. First we shall assume that there exists a vertex $v$ with at least $\gamma n^2$ link edges in its ``own'' partition class. 

\vspace{5pt}

\noindent \textbf{Case 1.} $H$ has the property that there is a vertex $v$, without loss of generality $v \in Y$, such that $|L(v)\cap\binom{Y}{2}|\geq \gamma n^2$.

The minimality of the number $|E_N(H)|$ of non-crossing hyperedges in $H$ implies that $|L(v)\cap\binom{X}{2}|\geq \gamma n^2$, as otherwise we could move $v$ from $Y$ to $X$ to achieve a smaller number of non-crossing hyperedges. 

Let $\Delta$ be an $r$-coloring of $H$. For class $Z\in\{X, Y\}$, let $A_Z=A_Z(\Delta)$ be the set of abundant colors with respect to vertex $v$ and $Z$, and let $J_{Z}=J_Z(\Delta)$ be the set of edges in $L(v)\cap\binom{Z}{2}$ whose color is not in $A_X\cup A_Y$.

We split the set $\mathcal{C}$ of feasible colorings of $H$ into two disjoint classes $\mathcal{C}_1$ and $\mathcal{C}_2 = \mathcal{C}\setminus\mathcal{C}_1$, where $\mathcal{C}_1$ is the set of colorings for which either 
\begin{itemize}
\item[(a)] $|A_X\cup A_Y|\not\in\{1, 2\}$

or

\item[(b)] $|J_{Z}|\geq 4\sqrt{\xi}n^2$, for some class $Z\in\{X, Y\}$.
\end{itemize}
%
%

\begin{claim}\label{claim:triples}
In every coloring $\Delta \in \mathcal{C}_1$, there exists a matching $M$ in $\mathcal{L}(v)$ with the following property. There is a set $\mathcal{T}(v)$ of $3$-colored triples $(f_1, f_2, f_3) \in M^3$ such that:
\begin{itemize}
\item[(a)] $f_1\in L(v)\cap\binom{X}{2}, f_2\in L(v)\cap\binom{Y}{2}, f_3\in L(v)\cap\binom{Z}{2}$, with $Z\in\{X, Y\}$. 
\item[(b)] distinct triples $(f_1, f_2, f_3),(f_1', f_2', f_3') \in \mathcal{T}(v)$ satisfy $\{f_1, f_2, f_3\} \neq \{f_1', f_2', f_3'\}$.
\item[(c)] $\left| \mathcal{T}(v) \right| \geq  \xi^3n^3/9$.
 \end{itemize}
\end{claim}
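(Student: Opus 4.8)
First I would collect the facts available in Case~1: $v\in Y$, $|L(v)\cap\binom{Y}{2}|\ge\gamma n^2$, and (as already argued from the minimality of $|E_N(H)|$) $|L(v)\cap\binom{X}{2}|\ge\gamma n^2$, while $|X|,|Y|\le n/2+2\sqrt{\delta}\,n\le 0.505\,n$ by~\eqref{eq:sizes2}. Consequently any subgraph of $\mathcal{L}(v)$ with $\beta n^2$ edges lying inside one of the two classes contains a matching of size at least $0.99\,\beta n$, since the endpoints of a maximum matching form a vertex cover of twice its size. The plan is: for each $\Delta\in\mathcal{C}_1$, produce a matching $M=M^X\cup M^Y$ in $\mathcal{L}(v)$ with $M^X\subseteq L(v)\cap\binom{X}{2}$ and $M^Y\subseteq L(v)\cap\binom{Y}{2}$ — any such union is a matching because $X\cap Y=\emptyset$ — and count ordered triples $(f_1,f_2,f_3)$ with $f_1\in M^X$, $f_2\in M^Y$, $f_3\in M^X\cup M^Y$ whose three edges carry three distinct colors. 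Each unordered set $\{f_1,f_2,f_3\}$ of this form comes from at most two such ordered triples, so a lower bound of $\tfrac29\xi^3 n^3$ on their number (or $\tfrac19\xi^3 n^3$ when the three colors are pinned, so that the underlying sets are automatically distinct) lets us take $\mathcal{T}(v)$ to be one triple per underlying set and obtain $|\mathcal{T}(v)|\ge\xi^3 n^3/9$. Condition~(a) of the claim is then met, since $f_1$ always lies in $\binom{X}{2}$, $f_2$ in $\binom{Y}{2}$, and $f_3$ in $\binom{X}{2}$ or $\binom{Y}{2}$.

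By definition of $\mathcal{C}_1$, at least one of the following holds, and I would treat them in turn. \emph{Case (i): $|J_Z|\ge 4\sqrt{\xi}\,n^2$ for some $Z$}; say $Z=X$, the other case being symmetric. Let $M^X$ be a maximum matching inside $J_X$, of size at least $3\sqrt{\xi}\,n$; since the color of each edge of $J_X$ is not abundant with respect to $X$, every color occurs fewer than $\xi n$ times in $M^X$. Let $M^Y$ be a maximum matching in $L(v)\cap\binom{Y}{2}$, of size at least $0.99\gamma n$. Choosing $f_1,f_3\in M^X$ of two distinct colors and then $f_2\in M^Y$ of a third color — excluded for fewer than $2\xi n$ values of $f_2$, as those two colors are not abundant with respect to $Y$ either — gives $\Omega(\xi\gamma\,n^3)$ rainbow triples, far more than $\xi^3 n^3/9$ since $\gamma\ge 16\xi$ by~\eqref{eq:41b}. \emph{Case (ii): $A_X\cup A_Y=\emptyset$}; then no color is abundant, so maximum matchings $D_X\subseteq L(v)\cap\binom{X}{2}$ and $D_Y\subseteq L(v)\cap\binom{Y}{2}$, each of size at least $0.99\gamma n$, have every color occurring fewer than $\xi n$ times, and picking $f_1\in D_X$, then $f_2\in D_Y$ of a new color, then $f_3\in D_X\cup D_Y$ of a third color yields $\Omega(\gamma^3 n^3)$ rainbow triples.

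\emph{Case (iii): $|A_X\cup A_Y|\ge 3$.} This is the delicate case. Each abundant color $\alpha$ witnesses a matching $N_\alpha$ of at least $\xi n$ link edges of color $\alpha$ inside some class $s(\alpha)\in\{X,Y\}$. If $A_X$ and $A_Y$ are both nonempty, fix distinct $\gamma_X\in A_X$, $\gamma_Y\in A_Y$ and a third abundant color $\gamma_3\in A_{s_3}$; assigning the colors $\gamma_X,\gamma_Y,\gamma_3$ to the classes $X,Y,s_3$ respectively, two of them fall inside the common class $s_3$, and I would merge the two corresponding monochromatic matchings into a single matching by truncating one to $\lceil\xi n/4\rceil$ edges and keeping the $\ge\xi n/2-2$ edges of the other that avoid its vertices, while the third color contributes its full $\ge\xi n$ edges inside the opposite class. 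As $\gamma_X,\gamma_Y,\gamma_3$ are distinct, this produces at least $(\xi n/4)(\xi n/2-2)(\xi n)\ge\xi^3 n^3/9$ rainbow triples (for $n$ large) with pairwise distinct underlying sets. If instead one of $A_X,A_Y$ is empty, say $A_Y=\emptyset$, then $|A_X|\ge 3$ and, exactly as in case (ii), a maximum matching $D_Y\subseteq L(v)\cap\binom{Y}{2}$ of size $\ge 0.99\gamma n$ has every color fewer than $\xi n$ times; taking $f_1,f_3$ of two distinct abundant colors inside $\binom{X}{2}$ (merging their matchings as above) and $f_2\in D_Y$ of a third color gives $\Omega(\xi^2\gamma\,n^3)\ge\xi^3 n^3/9$ rainbow triples, again with pinned, hence distinct, underlying sets.

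The hard part is case~(iii): beyond the bookkeeping of which abundant colors are available on which side, one is forced to place two of the three required colors inside a single vertex class, the two corresponding monochromatic matchings need not be vertex-disjoint, and merging them costs a constant factor — so the triple count bottoms out at $\tfrac18\xi^3 n^3-O(n^2)$, which only barely clears the target $\tfrac19\xi^3 n^3$; this is what forces the truncation fraction $1/4$, which maximizes $s\mapsto s(\xi n-2s)$. The remaining steps — bounding how many ordered triples share an underlying set (at most two in cases (i)--(ii), exactly one in (iii)) and checking that $\gamma\ge 16\xi$ makes the bounds $\Omega(\gamma^3 n^3)$, $\Omega(\xi\gamma n^3)$ and $\Omega(\xi^2\gamma n^3)$ dominate $\xi^3 n^3/9$ — are routine.
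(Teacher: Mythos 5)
Your proof is correct and follows essentially the same approach as the paper's: the same three-way case split driven by the value of $|A_X\cup A_Y|$ and the size of $J_Z$, the same use of the monochromatic matchings that witness abundance, and the same greedy disjointification of two monochromatic matchings inside a single class. The differences are cosmetic (truncating to $\xi n/4$ rather than the paper's balanced $\xi n/3$ split; bounding maximum matchings via the vertex-cover observation rather than via $|Z|\le n$; and tracking that each underlying set of link-edges is hit by at most $2$ rather than $6$ ordered triples) and do not change the substance of the argument.
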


\begin{proof}
Consider a coloring $\Delta \in \mathcal{C}_1$. We split the argument into three cases depending on the cardinalities of $A_X(\Delta)$ and $A_Y(\Delta)$.

\noindent \textit{Case 1)} $|A_X\cup A_Y|\geq 3$. Without loss of generality, assume that $|A_X|\geq |A_Y|$, which implies that $|A_X|\geq 2$. We consider two cases.

First assume that $A_Y\neq\emptyset$, so that there are three distinct colors $\alpha_1, \alpha_2\in A_X$ and $\alpha_3\in A_Y$. This implies that there exist vertex disjoint matchings $M_1$, $M_2$, $M_3$ in $\mathcal{L}(v)$, for which every edge in $M_i$ has color $\alpha_i$, $i=1, 2, 3$ and $|M_1| \geq \xi n/3$, $|M_2|\geq \xi n/3$, $|M_3|\geq \xi n$.  To see why this is true, note that, if we let $M_1$ be an arbitrary matching of size $\xi n/3$ whose elements have color $\alpha_1$, each edge in $M_1$ is incident with at most two edges in a maximum matching of color $\alpha_2$, so that at least $\xi n - 2\cdot\xi n/3 = \xi n/3$ edges in this matching are not incident with edges in $M_1$, which allows us to construct $M_2$. Set $M=M_1 \cup M_2 \cup M_3$. It is a matching with the property that all triples in $M_1 \times M_2 \times M_3$ are 3-colored. The number of triples is at least
$$
\left(\frac{\xi n}{3}\right)^2\xi n = \frac{\xi^3n^3}{9}.
$$

If $A_Y = \emptyset$, then consider colors $\alpha_1, \alpha_2\in A_X$ with vertex disjoint matchings $M_1$, $M_2$ having sizes $|M_1|\geq \xi n/3$, $|M_2|\geq \xi n/3$. As $A_Y=\emptyset$, the size of a maximum monochromatic matching of color $\alpha_1$ or $\alpha_2$ in $L(v) \cap \binom{Y}{2}$ is at most $\xi n$. Note that the number of edges of color $\alpha_1$ in $L(v) \cap \binom{Y}{2}$ is at most $2 \xi |Y| n$, as every edge in $L(v) \cap \binom{Y}{2}$ shares a vertex with at most $2(|Y|-2) \leq 2|Y|$ other edges in this set. This implies that the number of edges in $L(v)\cap\binom{Y}{2}$ with a rare color not in $\{\alpha_1, \alpha_2\}$ is at least
$$
\left|L(v)\cap\binom{Y}{2}\right| - 4\xi n |Y|
\geq
\gamma n^2 - 4\xi n^2
\stackrel{(\ref{eq:41b})}{\geq}
\frac{\gamma n^2}{2}.
$$
Then we may greedily find a matching $M_3$ in $L(v) \cap \binom{Y}{2}$ whose edges have colors in $[r]\setminus \{\alpha_1,\alpha_2\}$ of size at least
$$
\frac{\gamma n^2/2}{2|Y|} \stackrel{|Y| \leq n}{\geq} \frac{\gamma n}{4}.
$$
Set $M=M_1 \cup M_2 \cup M_3$. It is a matching with the property that all triples in $M_1 \times M_2 \times M_3$ are 3-colored. The number of triples is at least
$$
\left(\frac{\xi n}{3}\right)^2\cdot\frac{\gamma n}{4}
\stackrel{\eqref{eq:41a},(\ref{eq:41b})}{\geq}
\xi^3n^3
\geq
\frac{\xi^3n^3}{9}.
$$

\noindent \textit{Case 2)} $|A_X\cup A_Y|=0$. In this case, for every color $\alpha \in[r]$, the maximum size of a matching in $X$ or $Y$ of color $\alpha$ is at most $\xi n$. With a greedy construction we obtain that both $L(v)\cap\binom{X}{2}$ and $L(v)\cap\binom{Y}{2}$ contain matchings of size at least
$$\min\left\{\frac{|L(v)\cap\binom{X}{2}|}{2|X|}, \frac{|L(v)\cap\binom{Y}{2}|\}}{2|Y|}\right\}
\stackrel{|X|,|Y| \leq n}{\geq} \frac{\gamma n}{2}.$$

Fix matchings $M_X$ in $X$ and $M_Y$ in $Y$ for which
$$
\frac{\gamma n}{2}
\leq
\min\{|M_X|, |M_Y|\}
\leq
\max\{|M_X|, |M_Y|\}
\leq
\frac{n}{2}. 
$$
Clearly, $M=M_X \cup M_Y$ is a matching in $\mathcal{L}(v)$, which implies $|M_X| + |M_Y| \leq n/2$. For every color $\alpha \in[r]$, let $c_\alpha$ be the number of edges assuming color $\alpha$ in $M_X$ and $d_\alpha$ be the number of edges assuming color $\alpha$ in $M_Y$. Note that $c_\alpha, d_\alpha\leq\xi n$ and $\sum_{\alpha=1}^r c_\alpha\leq n/2 $, $\sum_{\alpha=1}^r d_\alpha\leq n/2$ and $\sum_{\alpha=1}^r (c_\alpha + d_\alpha) \leq n/2$.

The number of choices of triples $(f_1, f_2, f_3)$ with at most two colors where $f_1\in M_X$, $f_2\in M_Y$ and $f_3\in M_X\cup M_Y$, is at most
\begin{eqnarray*}
&&
\sum_{\alpha=1}^r c_\alpha d_\alpha \left(|M_X|+|M_Y|\right) +
\sum_{\alpha=1}^r c_\alpha |M_Y| (c_\alpha + d_\alpha) +
\sum_{\alpha=1}^r |M_X| d_\alpha (c_\alpha + d_\alpha)
\\
&\stackrel{|M_X| + |M_Y| \leq \frac{n}{2}}{\leq}&
\frac{n}{2}\sum_{\alpha=1}^r c_\alpha d_\alpha +
\frac{n}{2}\sum_{\alpha=1}^r c_\alpha (c_\alpha + d_\alpha) +
\frac{n}{2}\sum_{\alpha=1}^r d_\alpha (c_\alpha + d_\alpha)
\\
&\stackrel{(c_\alpha, d_\alpha\leq\xi n)}{\leq}&
\frac{\xi n^2}{2}\sum_{\alpha=1}^r c_\alpha +
\frac{\xi n^2}{2}\sum_{\alpha=1}^r (c_\alpha + d_\alpha) +
\frac{\xi n^2}{2}\sum_{\alpha=1}^r (c_\alpha + d_\alpha)
\\
&\leq&
\frac{\xi n^3}{4} +
\frac{\xi n^3}{4} +
\frac{\xi n^3}{4}
<
\xi n^3.
\end{eqnarray*}

As a consequence, the number of choices of $3$-colored triples $(f_1, f_2, f_3)$ is at least
$$
\left(\frac{\gamma n}{2}\right)^3 - \xi n^3
=
\frac{\gamma^3 n^3}{8}-\xi n^3\stackrel{(\ref{eq:41b})}{\geq}\frac{\gamma^3n^3}{16}
\stackrel{(\ref{eq:41b})}{\geq}
\frac{2\xi^3n^3}{3}
.$$
Each set $\{f_1,f_2,f_3\}$ may appear in at most $3!=6$ triples, so that we may define a set $\mathcal{T}(v)$ as in the statement of the claim with size at least $\xi^3n^3/9$, as required.

\noindent \textit{Case 3)} $|A_X\cup A_Y|\in\{1, 2\}$, and for some $Z\in\{X, Y\}$, $|J_{Z}|\geq 4\sqrt{\xi}n^2$. Without loss of generality we assume $Z=Y$, so that $|J_{Y}|\geq 4\sqrt{\xi}n^2$. First we look into the case where $A_X \neq \emptyset$. Then there exists a matching $M'$ in $J_{Y}$ of size at least
$$
\frac{4\sqrt{\xi}n^2}{2|Y|}
\stackrel{(\ref{eq:sizes2})}{\geq}   
\frac{4\sqrt{\xi}n^2}{\frac{3n}{2}}
\geq
2\sqrt{\xi}n+1,
$$
and hence the number of distinct pairs $\{f_2,f_3\}$ in $M'$  is at least
$$
\binom{2\sqrt{\xi}n+1}{2}
\geq
\frac{(2\sqrt{\xi}n)^2}{2}
=
2\xi n^2.
$$
For every color $\alpha \in[r]$, let $d_\alpha$ be the number of edges assuming color $\alpha$ in $M'$. Recall that all hyperedges in $J_Y$ are assigned colors in $[r] \setminus (A_X \cup A_Y)$. Since the number of pairs in $M' \subseteq J_{Y}$  with the same color is at most
$$
\sum_{\alpha=1}^{r}d_\alpha^2
\leq
\xi n\sum_{\alpha=1}^{r}d_\alpha
\leq
\frac{\xi n^2}{2},
$$
we have that the number of pairs $\{f_2,f_3\}$ as above such that $f_2$ and $f_3$ have different colors is at least
\begin{equation}\label{aux_eq}
2\xi n^2 - \frac{\xi n^2}{2}
\geq
\xi n^2.
\end{equation}
Let $J$ denote this set of pairs.

Since there is a color $\alpha\in A_X$, there exists a matching $M''$ of size $|M''|\geq \xi n$ in $X$ for which every edge in $M''$ assumes color $\alpha$. Clearly, $M=M'\cup M''$ is a matching. Form a set $\mathcal{T}(v) \subseteq M'' \times M'^2$ of triples given by one edge from $M''$ and a pair of edges in $J$, so that each triple is 3-colored. The cardinality of $\mathcal{T}(v)$ is at least
$$
\xi n \cdot \xi n^2
=
\xi^{2} n^3
\geq
\frac{\xi^3n^3}{9}.
$$
If $A_X = \emptyset$, then $|A_Y|\geq 1$. Let $\alpha\in A_Y$, and let $M'$ be a matching in $L(v)\cap \binom{Y}{2}$ of size $|M'|\geq \xi n$ for which every edge in $M'$ assumes color $\alpha$. Since $A_X = \emptyset$, we know that there are at least
$$
\left|L(v)\cap\binom{X}{2}\right| - \xi n\cdot2|X|
\geq
\gamma n^2 - 2\xi n^2
\stackrel{(\ref{eq:41b})}{\geq}
4\sqrt{\xi}n^2
$$
edges in $L(v)\cap\binom{X}{2}$ assuming a rare color different from $\alpha$. Thus, with calculations as the ones leading to~\eqref{aux_eq}, we obtain a set $J$ of at least $\xi n^2$ mutually vertex disjoint pairs of edges contained in a matching $M''$ in $X$ in different colors, which are different from $\alpha$. Set $M=M'\cup M''$ and define $\mathcal{T}(v)$ by forming triples with a pair of edges in $J$ and one edge in $M''$. The set $\mathcal{T}(v)$ contains at least
$$
\xi n^2\cdot\xi n \geq \frac{\xi^3n^3}{9}
$$
$3$-colored triples $(f_1, f_2, f_3)$.
\end{proof}

We wish to prove that $|\mathcal{C}_1|\leq r^{|E(B_n)|-1}$. For any triple $(f_1, f_2, f_3)$ in $\mathcal{T}(v)$, let $t_1, t_2, t_3, t_4\in\binom{V}{3}$ be four $3$-element sets (not necessarily hyperedges from $H$) such that $\{\{v\}\cup f_i: i=1, 2, 3\}\cup\{t_1, t_2, t_3, t_4\}$ forms a Fano plane. Note that each of the $3$-element sets $t_1, t_2, t_3, t_4$ contains precisely one vertex from each $f_i$. (In fact, there are two different sets of four $3$-element sets $t_1, t_2, t_3, t_4$ with this property for any given $f_1, f_2, f_3$ and we just fix one of those two sets arbitrarily.) Furthermore, note that for two different choices of $f_1, f_2, f_3$ and $f_1^\prime, f_2^\prime, f_3^\prime$, there is at least one $i \in \{1,2,3\}$ with $f_i \cap (f_1^\prime \cup f_2^\prime \cup f_3^\prime) = \emptyset$. Therefore the corresponding sets $\{t_1, t_2, t_3, t_4\}, \{t_1^\prime, t_2^\prime, t_3^\prime, t_4^\prime\}$ are disjoint. 

Fix any $r$-coloring $\Delta \in \mathcal{C}_1$. By Claim~\ref{claim:triples}, the coloring of $\mathcal{L}(v)$ induced by this coloring leads to at least $\xi^3n^3/9$ distinct $3$-colored triples in $\mathcal{T}(v)$. Fix one of these $3$-colored triples $(f_1, f_2, f_3)$. Since $\{v\}\cup f_i$ are colored in different colors, either one of the $3$-element sets $t_i$ must be missing from $H$, or altogether at most six colors are assigned to the seven edges $\{f_1, f_2, f_3, t_1, t_2, t_3, t_4\}$, because there is no rainbow Fano plane in $H$. This leads to at most
$$Q = 3 \cdot 4 \cdot r^3 + \binom{4}{2} r^3 = 18r^3$$
ways to extend the coloring of $\{f_1,f_2,f_3\}$ to $t_1, t_2, t_3, t_4$, because one of these four hyperedges may be assigned one of the colors used for $f_1,f_2,f_3$ or two of $\{t_1,t_2,t_3,t_4\}$ use the same new color.

For each triple $\{f_1,f_2,f_3\}$ in $\mathcal{T}(v)$, we choose a $4$-element set $\{t_1, t_2, t_3, t_4\}$ of crossing hyperedges in $H$ with respect to partition $\mathcal{V}$ that form a Fano plane with the triple $(\{v\} \cup f_1, \{v\} \cup f_2, \{v\} \cup f_3)$, if such a set exists (recall that there may be at most two). Let $T=T(v)$ be the family of pairs obtained in this way. 
We wish to find a lower bound on the cardinality of $T$. Every time a triple $\{f_1,f_2,f_3\}$ cannot be paired with a $4$-element set, it means that some crossing hyperedge must be missing from $H$. Moreover, for any two triples $\{f_1,f_2,f_3\}$ and $\{f'_1,f'_2,f'_3\}$ that cannot be paired with $4$-element sets, the missing hyperedges are different. Recall that the number of non-crossing hyperedges in $H$ with respect to $\mathcal{V}$ is at most 
$\delta n^3$ 
and that $H$ has at least $\ex(n,\Fano)$ hyperedges. This means that the number of triples that would be crossing hyperedges with respect to $\mathcal{V}$, but that do not lie in $H$, is at most 
$$\delta n^3 \stackrel{(\ref{eq:41c})}{\leq} \frac{\xi^3 n^3}{36}.$$
We conclude that
$$
|T|
\geq
\frac{\xi^3n^3}{9} - \delta n^3 \stackrel{(\ref{eq:41c})}{\geq}
\frac{\xi^3n^3}{9} - \frac{\xi^3n^3}{36}
=
\frac{\xi^3n^3}{12}.$$

To obtain an upper bound on $|\mathcal{C}_1|$, we have at most $r^{|L(v)|}$ ways to color hyperedges containing vertex $v$ and there are at most $Q^{|T|}$ ways to color the Fano planes in $H$ that extend $3$-colored triples $(f_1, f_2, f_3)$ in $\mathcal{T}(v)$, and finally at most $r^{|E(H)|-4|T|-|L(v)|}$ ways to color the remaining hyperedges of $H$. Therefore,
\begin{eqnarray} \label{eq:101x}
|\mathcal{C}_1|
\leq
Q^{|T|}r^{|E(H)|-4|T|}
&\leq&
(18r^3)^{|T|}r^{|E(H)|-4|T|}
.
\end{eqnarray}
The right-hand side of (\ref{eq:101x}) increases as $|T|$ decreases. Since $|T|\geq\xi^3n^3/12$, we have
\begin{eqnarray}
|\mathcal{C}_1|
&\leq&
(18r^3)^{\frac{\xi^3 n^3}{12}}r^{|E(H)|-4 \frac{\xi^3 n^3}{12}}
\nonumber\\
&\stackrel{(\ref{eqstar1})}{\leq}&
18^{\frac{\xi^3 n^3}{12}}r^{|E(B_n)|+\delta n^3- \frac{\xi^3 n^3}{12}}
\nonumber\\
&\stackrel{(\ref{eq:41c})}{\leq}&
\left(r^{\log_r18}\right)^{\frac{\xi^3 n^3}{12}}r^{|E(B_n)| - \frac{\xi^3 n^3}{18}}\nonumber\\
&=&
r^{|E(B_n)|+\frac{\xi^3}{12}\left(\log_r18-\frac{2}{3}\right)n^3}\nonumber\\
&\stackrel{(\ref{eq:41d})}{\leq}& r^{|E(B_n)|-1}.\label{eq:nlarge1}
\end{eqnarray}

We recall that $\mathcal{C}_2 = \mathcal{C}\setminus\mathcal{C}_1$ is the class of colorings $\Delta$ satisfying $|A_X(\Delta)\cup A_Y(\Delta)|\in\{1, 2\}$ and $|J_Z|<4\sqrt{\xi}n^2$ for every class $Z\in\{X, Y\}$. By our bound on $|\mathcal{C}_1|$, we derive
\begin{eqnarray}\label{eqstar3}
|\mathcal{C}_2| = |\mathcal{C}|-|\mathcal{C}_1| &\stackrel{(\ref{eq:nlarge1})}{\geq}& r^{|E(B_n)|+m}-r^{|E(B_n)|-1}
\geq
r^{|E(B_n)|+m-1}.
\end{eqnarray}

Next we estimate the number of colorings of the set of hyperedges incident to vertex $v$ that can be extended to a coloring in $\mathcal{C}_2$.

By definition of $\mathcal{C}_2$, in each class, there are at most $4\sqrt{\xi}n^2$ edges assuming colors in $[r] \setminus (A_X \cup A_Y)$. To count the number of colorings that can be extended to a coloring from $\mathcal{C}_2$, we first choose at most two colors for $A_X \cup A_Y$, which may be done in at most $r^2$ ways. For each class, there are at most $\binom{n^2}{4\sqrt{\xi}n^2}$ ways to choose hyperedges assuming colors in $[r] \setminus (A_X \cup A_Y)$, which may be colored in at most $r^{4\sqrt{\xi}n^2}$ ways. There are at most $r^{|X||Y|}$ ways to color hyperedges containing $v$ and one vertex from each class. Finally, each of the other hyperedges containing $v$ must assume a color in $A_X \cup A_Y$, so that we have at most $2^{|L(v)|-2\cdot4\sqrt{\xi}n^2-|X||Y|}$ ways to color the remaining hyperedges. We conclude that, for $n$ sufficiently large, the number of colorings of the set of hyperedges incident with vertex $v$ that can be extended to a coloring in $\mathcal{C}_2$ is at most 
\begin{eqnarray}
&&
r^2
\binom{n^2}{4\sqrt{\xi}n^2}^2
r^{8\sqrt{\xi}n^2}
r^{|X||Y|}
2^{|L(v)|-8\sqrt{\xi}n^2-|X||Y|}
\nonumber\\
&\stackrel{((\ref{eq:entropy1}),|L(v)|\leq n^2)}{\leq}&
2^{2h(4\sqrt{\xi})n^2+n^2-8\sqrt{\xi}n^2-|X||Y|}
r^{8\sqrt{\xi}n^2+|X||Y|+2}
\nonumber\\
&\stackrel{(|X||Y|\leq n^2/4)}{\leq}&
r^{(\log_r2)(2h(4\sqrt{\xi})n^2+n^2-8\sqrt{\xi}n^2)+8\sqrt{\xi}n^2+(1-\log_r2)\frac{n^2}{4}+2}
\nonumber\\
&=&
r^{\left[
(\log_r2)\left(2h(4\sqrt{\xi})-8\sqrt{\xi}+ \frac{3}{4}\right)+8\sqrt{\xi}+\frac{1}{4}
\right]n^2+2}
\nonumber\\
&\stackrel{(*)}{\leq}&
r^{\frac{26}{100}n^2}.
\label{eq:nlarge5}
\end{eqnarray}
Inequality (*) can be seen as follows.
We claim that
\begin{eqnarray}\label{eqc2geq}
(\log_r2)\left(2h(4\sqrt{\xi})-8\sqrt{\xi}+\frac{3}{4}\right)
+
8\sqrt{\xi}+\frac{1}{4}
<
\frac{26}{100},
\end{eqnarray}
because the derivative of the function 
$$f(x) =
(\log_r2)\left(2h(x)-2x +\frac{3}{4}\right)
+
2x+\frac{1}{4}$$
satisfies
$$f'(x) = \frac{1}{\ln r} \left( 2 \ln \left(\frac{1-x}{x}\right) - 2\ln 2\right) + 2,$$
hence is increasing for $0< x \leq 1/2$ and $r \geq 2$. As  $\xi\leq\gamma^3/16 \leq 1/(1406^3\cdot 16)$, to obtain (\ref{eqc2geq}), inserting $x=4 \sqrt{\xi} \leq(1/1406)^{3/2}$, gives with (\ref{eq:entropy2}) for $r\geq 21^{64}$:
\begin{eqnarray*}
f((1/1406)^{3/2})
&<&
(\log_r 2) \left(2\cdot0.001 - 2\left(\frac{1}{1406} \right)^{\frac{3}{2}} +\frac{3}{4}\right) +  2\left(\frac{1}{1406} \right)^{\frac{3}{2}} +\frac{1}{4},
\\
&<&
(\log_r 2)0.76 + 0.251 < \frac{26}{100}.
\end{eqnarray*}

Setting $H^\prime = H-v$, we obtain that the number of feasible colorings of $H^\prime$ is at least
\begin{eqnarray}
&&
\frac{|\mathcal{C}_2|}{r^{\frac{26}{100}n^2}}
\geq
\frac{r^{|E(B_n)|+m-1}}{r^{\frac{26}{100}n^2}}
=
r^{|E(B_n)|+m-1-\frac{26}{100}n^2}.
\nonumber\\
&\stackrel{(n \gg 1)}{\geq}&
r^{|E(B_n)|+m-1 - \frac{3n^2}{8} + n - \frac{5}{8} + 2} \nonumber \\
&\stackrel{(\ref{eq|E(B_n)|})}{\geq}& r^{|E(B_{n-1})|+m+1},\label{eq:nlarge2}
\end{eqnarray}
which proves Claim~\ref{claim:main} for hypergraphs $H$ satisfying the assumptions of Case~1.


\vspace{15pt}

\noindent \textbf{Case 2.} $H$ has the property that for every class $Z\in\{X, Y\}$ and every vertex $v\in Z$, the inequality $|L(v)\cap\binom{Z}{2}|\leq\gamma n^2$ holds.

Since $H\neq B_n$, there exists a hyperedge $e = \{v_1, v_2, v_3\}$ with all vertices in the same class, say $e\subseteq Y$. Let $\L$ be the graph with vertex set $X$ and edge set $L = \bigcap_{i=1}^{3}L(v_i)\cap\binom{X}{2}$. In particular, for any $f \in \binom{X}{2}$, $f$ lies in $L$ if and only if $f \in L(v_i)$ for every $i \in [3]$. As $|L(v_i)\cap\binom{Y}{2}|\leq\gamma n^2$ and $\delta_1(H)\geq\delta_1(B_n)\geq 3n^2/8-n$ we have 
$$
\left|L(v_i)\cap\binom{X}{2}\right|
\geq
\frac{3}{8} n^2-n-\gamma n^2 - |X||Y|
.$$

This implies that
$$
\left|\binom{X}{2} \setminus L(v_i) \right| \leq \binom{|X|}{2} - \frac{3}{8} n^2+\gamma n^2 + |X||Y|+n,
$$
so that 
\begin{eqnarray*}\label{eqdelta1}
|L|
&=&\binom{|X|}{2}-\left|\bigcup_{i=1}^3 \binom{X}{2} \setminus L(v_i) \right|\\
&\geq& \binom{|X|}{2}-3\left(\binom{|X|}{2} - \frac{3}{8} n^2+\gamma n^2 + |X||Y|+n\right)\\
&=&
\frac{9}{8}n^2 - 2\binom{|X|}{2} - 3|X||Y| - 3\gamma n^2-3n\\
&\geq&
\frac{9}{8}n^2 - |X|^2 - 3|X||Y| - 3\gamma n^2-3n
.
\end{eqnarray*}

\begin{claim}\label{claim:K_4}
There are at least
$$\frac{1}{6}
\left( 
\frac{2-240\gamma}{80}
\right)n^2$$
mutually edge-disjoint copies of the complete graph $K_4$ in $\mathcal{L}$.
\end{claim}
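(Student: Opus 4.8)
The plan is to exploit the fact that $\mathcal{L}$ is an extremely dense graph on roughly $n/2$ vertices, and then to pull out copies of $K_4$ one at a time, using Tur\'an's theorem to guarantee that a fresh copy is available as long as sufficiently many edges remain.

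First I would make the density of $\mathcal{L}$ precise. Writing $|X| = \tfrac n2 + t$ and $|Y| = \tfrac n2 - t$, we have $|t| \le 2\sqrt{\delta}\,n$ by~\eqref{eq:sizes2}, and a direct computation gives $|X|^2 + 3|X||Y| = n^2 + nt - 2t^2 \le n^2 + 2\sqrt{\delta}\,n^2$. Plugging this into the lower bound on $|L|$ displayed above yields
\[
|L| \;\ge\; \tfrac18 n^2 - 2\sqrt{\delta}\,n^2 - 3\gamma n^2 - 3n .
\]
Since $\delta < \gamma^2/(4\cdot 9^2)$ by~\eqref{eq:41c}, we get $2\sqrt{\delta} < \gamma/9$, and absorbing the linear term for $n$ sufficiently large gives $|L| \ge (\tfrac18 - 4\gamma)n^2$.

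Next I would bound the obstruction to finding a $K_4$. The graph $\mathcal{L}$ has vertex set $X$ with $|X| \le \tfrac n2 + 2\sqrt{\delta}\,n$, so by Tur\'an's theorem any $K_4$-free subgraph of $\mathcal{L}$ has at most $\ex(|X|,K_4) \le |X|^2/3 \le (\tfrac1{12} + \gamma)n^2$ edges, again using~\eqref{eq:41c} to absorb the $\sqrt{\delta}$ term. Now carry out the greedy procedure: as long as the current graph has more than $\ex(|X|,K_4)$ edges it contains a copy of $K_4$; delete the six edges of one such copy and repeat. This produces at least
\[
\frac{|L| - \ex(|X|,K_4)}{6} \;\ge\; \frac{(\tfrac18 - 4\gamma)n^2 - (\tfrac1{12} + \gamma)n^2}{6} \;=\; \frac{\tfrac1{24}n^2 - 5\gamma n^2}{6}
\]
mutually edge-disjoint copies of $K_4$ in $\mathcal{L}$. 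Finally, $\tfrac1{24} - 5\gamma \ge \tfrac1{40} - 3\gamma = \tfrac{2-240\gamma}{80}$ is equivalent to $\tfrac1{60} \ge 2\gamma$, which holds since $\gamma \le 1/1406$ by~\eqref{eq:41a}; this gives the claimed bound $\tfrac16\bigl(\tfrac{2-240\gamma}{80}\bigr)n^2$.

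There is no serious obstacle here: the structural content is just \emph{dense graph $+$ greedy removal $+$ Tur\'an's theorem}, and the only work is careful bookkeeping of the $\sqrt{\delta}$ and $\gamma$ error terms coming from~\eqref{eq:sizes2},~\eqref{eq:41a} and~\eqref{eq:41c} so that the final numerical inequality closes with the stated constants.
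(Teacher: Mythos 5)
Your proof is correct and follows essentially the same strategy as the paper's: apply Tur\'an's theorem to bound $\ex(|X|,K_4)$ by $|X|^2/3$, greedily remove edge-disjoint copies of $K_4$ at a cost of six edges each, and compare against the lower bound on $|L|$ established just before the claim. The only difference is cosmetic bookkeeping — you bound $|X|^2+3|X||Y|$ via the substitution $|X|=n/2+t$ and handle the Tur\'an term separately, whereas the paper folds them into the single quantity $\tfrac43|X|^2+3|X||Y|$ and maximizes it over the admissible range of $|X|$; both close with the stated constant.
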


\begin{proof}
By Tur\'{a}n's theorem~\cite{turan}, a graph with $|X|$ vertices and more than
$|X|^2/3$
edges contains a $K_4$. Because of this, if $|L|>  |X|^2/3$, then there is a copy of $K_4$ in $\mathcal{L}$. Removing the six edges of this copy from $\L$, provided that $|L|-6 \geq |X|^2/3$, we may find another copy of $K_4$ that is edge-disjoint from the first one. Repeating this argument, the number of such copies of $K_4$ that we find is at least
\begin{eqnarray*}
&&\frac{1}{6}\left(|L|-\frac{|X|^2}{3}\right)
\geq
\frac{1}{6}
\left(
\frac{9}{8}n^2 - \frac{4}{3} |X|^2 - 3|X||Y| - 3\gamma n^2 -3n
\right).
\end{eqnarray*}

Since $|X|+|Y|=n$, and (\ref{eq:sizes2}) holds, we have that $4|X|^2/3 + 3|X||Y|$ is, without loss of generality, maximum for $|X| = n/2 + 2 \sqrt{\delta} n$ and $|Y | =  n/2 - 2 \sqrt{\delta} n$, i.e.,
\begin{eqnarray}
&&
\frac{4}{3}|X|^2+3|X||Y| + 3n
\nonumber\\
&\leq&
\label{eq:nlarge7}
\left(\frac{4}{3}\left(\frac{1}{2}+2\sqrt{\delta}\right)^2 + 3\left(\frac{1}{4}-4\delta\right)\right)n^2
+ 3n
\stackrel{(\ref{eq:41c})}{\leq}
\frac{11}{10}n^2.
\end{eqnarray}
Thus, we have at least
\begin{eqnarray*}
\frac{1}{6}\left(|L|-\frac{|X|^2}{3}\right)
&\geq&
\frac{1}{6}
\left(
\frac{9}{8} - 
\frac{11}{10} - 3\gamma
\right)n^2
=
\frac{1}{6}
\left( 
\frac{2-240\gamma}{80}
\right)n^2
\end{eqnarray*}
mutually edge-disjoint copies of $K_4$ in $\mathcal{L}$.
\end{proof}

Let $K^1, \ldots, K^q$ be the mutually edge-disjoint copies of $K_4$ in $\mathcal{L}$ given by Claim~\ref{claim:K_4}, where
\begin{eqnarray}\label{eq4.4}
q
\geq
\frac{1}{6}
\left( 
\frac{2-240\gamma}{80}
\right)n^2.
\end{eqnarray}
Since $E(K^j)\subseteq L$ for every $j\in[q]$, every such $K^j$ forms a Fano plane together with the hyperedge $e$. Fixing a color for $e$, we can color the six hyperedges that correspond to the edges of every $K^j$ in less than  $6r^5 + \binom{6}{2}r \cdot r^4 = 21r^5$ ways.

Set $H^\prime = H- e $ (that is, the vertices in $e$ are deleted from $H$). Let $E_e$ denote the set of hyperedges of $H$ that contain at least one vertex from $e = \{v_1, v_2, v_3\}$. Obviously, $|E_e| \leq 3\gamma n^2 + 3\binom{|X|}{2} + 3|X||Y|$. From the assumption $|E(X)|+|E(Y)|\leq\delta n^3$ and the fact that $|E(H)|\geq |E(B_n)|$, it follows that, for $n$ sufficiently large,
\begin{eqnarray*}
|E_e|
&\leq&
3\binom{|X|}{2} + 3|X||Y| + 3\gamma n^2
\nonumber\\
&\stackrel{(\ref{eq:sizes2})}{\leq}&
\frac{9}{2}\left(\frac{1}{2}+2\sqrt{\delta}\right)^2n^2+3\gamma n^2
\nonumber\\
&\stackrel{(\ref{eq:41c})}{\leq}&
\frac{9}{8}n^2+4\gamma n^2  \nonumber \\
&\stackrel{(n \gg 1)}{\leq}& \delta_1(B_n)+\delta_1(B_{n-1})+\delta_1(B_{n-2})+5\gamma n^2
\\
&=&|E(B_n)|-|E(B_{n-3})|+5\gamma n^2
\nonumber,
\end{eqnarray*}
which implies that
\begin{eqnarray} \label{eq:nlarge6}
|E(B_n)|-|E_e|&\geq& |E(B_{n-3})|-5\gamma n^2.
\end{eqnarray}

We can color hyperedges in $E_e$ in at most
$$r^{|E_e|}\left(\frac{21r^5}{r^6}\right)^q = 21^qr^{|E_e|-q}$$
ways.

Consequently, for $n$ sufficiently large, the number of feasible colorings of $H^\prime$ is at least
\begin{eqnarray}
\frac{r^{|E(B_n)|+m}}{21^qr^{|E_e|-q}}
&=&
21^{-q}r^{|E(B_n)|+m-|E_e|+q}
\stackrel{(\ref{eq:nlarge6})}{\geq}
r^{|E(B_{n-3})|+m-5\gamma n^2+q(1-\log_r21)}\nonumber\\
&\stackrel{(\ref{eq4.4})}{\geq}&
r^{|E(B_{n-3})|+m-5\gamma n^2+\frac{1}{6}
\left( 
\frac{2-240\gamma}{80}
\right)n^2(1-\log_r21)}\nonumber\\
&\geq&
r^{|E(B_{n-3})|+m+1}\label{eq:nlarge3},
\end{eqnarray}
where the last inequality can be seen as follows.
From (\ref{eq:41a}) and (\ref{eq:41d}) we obtain that
\begin{eqnarray*}\label{eq4.2}
-5\gamma+\frac{1}{6}
\left( 
\frac{2-240\gamma}{80}
\right)
(1-\log_r21)
>
0, 
\end{eqnarray*}
because $\gamma\leq1/1406$ and $r>21^{64}$ implies that $(1-\log_r21)>63/64$.

This concludes Case~2, finishes the proof of Claim~\ref{claim:main} and consequently the proof of Theorem~\ref{thm:main:fano}.
\end{proof}

\section{Final remarks and open problems}\label{sec_final}

In this paper, we have shown that, for sufficiently large $r$ and $n$, the hypergraph $B_n$ is the unique $n$-vertex 3-uniform hypergraph admitting the largest number of $r$-edge colorings with no rainbow copy of the Fano plane. A natural question would be to ask for the best possible values of $r$ and $n$ for which this holds. With respect to $r$, it is clear that our result cannot possibly be extended to $r \leq 10$, as the complete $3$-uniform $n$-vertex hypergraph $K_n^{(3)}$ admits at least $\max\{r^{\binom{n}{3}},6^{\binom{n}{3}}\}$ distinct $r$-colorings in which at most six colors are used, and this is larger than $r^{\ex(n,\Fano)}$ for $r \leq 10$. On the other hand, we are convinced that the value of $r_0$ provided in Theorem~\ref{thm:main:fano} is far from optimal. To the best of our knowledge, it may even be that $r_0=11$. In fact, any improvement on the range and influence of $\delta$ in Theorem~\ref{thm:kee1} would immediately be translated into a better value of $r_0$ in Lemma~\ref{lem:est:col:FK_fano}. This lemma is the only significant hurdle for better bounds on $r_0$ using the current approach, that is, it is possible to adapt our proof of Theorem~\ref{thm:main:fano} to lesser values of $r$ such that Lemma~\ref{lem:est:col:FK_fano} holds.

Regarding the value of $n_0=n_0(r)$ given in the proof of Theorem~\ref{thm:main:fano}, we believe that it is by no means optimal and we have made no particular effort to optimize it. Indeed, our proof is based on the weak regularity lemma~\cite{chung91,fr92,KNRSembedding,steger90}, which requires very large values on $n_0$ in the worst case. In the graph case, better bounds have been obtained in~\cite{BalLi18?,han} using strategies based on the container method~\cite{BMS2015,ST2015} (see also~\cite{multicoloredcontainers} for a colored version).

We would also like to mention that our proof of Lemma~\ref{lem:est:col:FK_fano} applies in more general contexts, as we now describe. For fixed positive integers $\ell$ and $k$, let $\mathcal{I}_{\ell, k}$ be the set of nonnegative integral solutions to  $x_1+\cdots+x_{\ell}=k$.
\begin{definition}\label{defmultipartitehypergraph}
For integers $n$, $\ell$ and $k$, a vector $I = (x_1, \ldots, x_{\ell})\in\mathcal{I}_{\ell, k}$ and a partition $\V = \{V_1, \ldots, V_{\ell}\}$ of $[n]$, let $H_{I, \V}(n)$ be the $k$-uniform hypergraph with vertex set $[n]$ and edge set given by all $k$-element subsets $e\subseteq [n]$ such that there is a permutation $\pi$ of $[\ell]$ for which $\left(|e\cap V_{\pi(1)}|, \ldots, |e\cap V_{\pi(\ell)}|\right) = I$. The hypergraph $H_{I, \V}(n)$ is called the \emph{complete multipartite hypergraph} with respect to $I$ and $\V$. We say that $I$ and $\V$ are the \emph{intersection vector} and the \emph{partition} of $H_{I, \V}(n)$, respectively. Moreover, if $H=K_n^{(k)}$ is the complete $k$-uniform hypergraph on $[n]$, we say that $B_{I, \V}(H)=E(H) \setminus E(H_{I, \V}(n))$ is the set of \emph{bad hyperedges} of $H$ with respect to $\V$ and $I$.
\end{definition}
For example, when $\ell=2$, $k=3$, $I=(2,1)$ and $\V=\{V_1,V_2\}$ is a balanced partition of $[n]$, we have $H_{I, \V}(n)=B_n$. Moreover, if $H$ is any $3$-uniform hypergraph on $[n]$, then $B_{I, \V}(H)$ is the set of hyperedges of $H$ that are entirely contained in $V_1$ or in $V_2$.

\begin{definition}[Well-behaved multipartite extremal hypergraph]\label{defturanextremal}
A $k$-uniform hypergraph $F$ has a \emph{well-behaved multipartite extremal hypergraph} if there exist positive integers $\ell$ and $n_0$, a constant $\xi>0$, and an intersection vector $I\in\mathcal{I}_{\ell, k}$ satisfying the following properties.
\begin{itemize}
\item[(a)] For every positive integer $n$ and for every partition $\V = \{V_1, \ldots, V_{\ell}\}$ of $[n]$, the hypergraph $H_{I, \V}(n)$ is $F$-free;

\item[(b)] For any integer $n\geq n_0$, there is a partition $\V = \{V_1, \ldots, V_{\ell}\}$ of $[n]$ for which $\ex(n, F) = |E(H_{I, \V}(n))|$. Moreover, this partition satisfies $|V_i| \geq \xi n$ for all $i \in [\ell]$.

\item[(c)] For every $\delta>0$, there exist an integer $n_1 \geq n_0$ and a constant $\epsilon_s = \epsilon_s(\delta)$ such that every $F$-free hypergraph $H=(V, E)$ on $n\geq n_1$ vertices with at least $\ex(n, F)-\epsilon_s n^k$ hyperedges admits a partition $\V = \{V_1, \ldots, V_{\ell}\}$ of $V$ such that
$$|B_{I, \V}(H)| \leq \delta n^k.$$
\end{itemize}
\end{definition}
To the best of our knowledge, the structural description of a well-behaved multipartite extremal hypergraph applies to all linear hypergraphs $F$ whose extremal hypergraph is known and is dense. Examples include expanded complete graphs (see Pikhurko~\cite{PH_ell+1^k} and Mubayi~\cite{Mub06} for the extremal and the stability result, respectively), fan hypergraphs (see Mubayi and Pikhurko~\cite{MPfan}) and for more general classes generalizing these instances (see Brandt, Irwin and Jiang~\cite{BIJ18} and Norin and Yepremyan~\cite{NY2018}).

A few changes in our proof of Lemma~\ref{lem:est:col:FK_fano} yield the following stability result. A proof of this result may be found in the first author's doctoral thesis~\cite{contiero_thesis}.
\begin{theorem}\label{lemmamain2}
Let $k\geq 2$ be an integer. For every $\delta>0$ and every linear $k$-uniform hypergraph $F$ that has a well-behaved multipartite extremal hypergraph with intersection vector $I\in\I_{\ell, k}$, there is $r_0 = r_0(\delta,F)$ with the following property. For all integers $r\geq r_0$ there exists $n_0 = n_0(r)$ such that, if $n\geq n_0$ and $H = (V, E)$ is an $n$-vertex $k$-uniform hypergraph with at least $r^{\ex(n, F)}$ distinct $(F, R)$-free $r$-colorings, where $R$ is the rainbow pattern of $F$, then there is a partition $\V = \{V_1, \ldots, V_\ell\}$ of $V$ for which
$$|B_{I, \V}(H)|\leq \delta n^k.$$
\end{theorem}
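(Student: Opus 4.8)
The plan is to follow the proof of Lemma~\ref{lem:est:col:FK_fano} line for line in structure, replacing the balanced bipartition and the hypergraph $B_n$ by an $\ell$-partition and the complete multipartite hypergraph $H_{I,\V}(n)$ of Definition~\ref{defmultipartitehypergraph}, and replacing the two Fano-specific inputs --- Lemma~\ref{lem:sizes} and Claim~\ref{claim:copies} --- by general statements. Concretely: apply the Colored Regularity Lemma (Theorem~\ref{theoremcoloredregularity}) to an $r$-coloring of $H$ realizing $c_{r,(F,R)}(H)\ge r^{\ex(n,F)}$, form the multicolored cluster hypergraph $\R=\R_H(\V,\eta)$ of Definition~\ref{defmulticolcluster}, and bound the number of colorings consistent with a given pair $(\V,\R)$ by $\binom{n^k}{r\eta n^k}r^{r\eta n^k}(\prod_{e\in E(\R)}|L_e|)^{(n/m)^k}$, all but $O(r\eta n^k)$ hyperedges of $H$ lying in a regular $\eta$-dense $k$-tuple. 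Writing $t=|E(F)|$ and splitting $E(\R)$ into edges with at least $t$ list colors --- whose number $e_{\ge t}(\R)$ is at most $\ex(m,F)$, since otherwise greedily picking a color from each list and applying Lemma~\ref{lem:clusterlemma} would produce a rainbow copy of $F$ in $H$ --- and edges with fewer, set $\beta(\R)=(\ex(m,F)-e_{\ge t}(\R))/m^k$, exactly as in Lemma~\ref{lem:est:col:FK_fano}.

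The substantive new ingredient is the analogue of Claim~\ref{claim:copies}: there is a constant $c=c(F)>0$ such that every partition $\U=\{U_1,\dots,U_\ell\}$ of $[m]$ all of whose parts have size at least $\xi m/2$, together with any bad edge $f\in B_{I,\U}(K_m^{(k)})$, satisfies that $H_{I,\U}(m)+f$ contains at least $c\,m^{|V(F)|-k}$ copies of $F$ through $f$. I would prove this using properties~(a) and~(b) of Definition~\ref{defturanextremal}: the extremal partition $\V^\ast$ from~(b) has $\ex(m,F)$ edges and all parts of linear size, so for every type $J$ (an intersection pattern of a $k$-set with the $\ell$ parts) there is an edge of $K_m^{(k)}$ of type $J$ w.r.t.\ $\V^\ast$; if $J\neq I$, then since $H_{I,\V^\ast}(m)$ is extremal, adding that edge creates a copy of $F$, necessarily through it by property~(a), and this copy exhibits a fixed edge $e_0^J\in E(F)$ and a vertex coloring $\psi_J\colon V(F)\to[\ell]$ under which $e_0^J$ has type $J$ and every other edge of $F$ has type $I$. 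These finitely many templates $(e_0^J,\psi_J)$ depend only on $F$ and $I$; given a near-balanced $\U$ and a bad edge $f$ of type $J$, one embeds $F$ by mapping $e_0^J$ onto $f$ and each remaining vertex $w$ into $U_{\psi_J(w)}$, with $\Omega(m^{|V(F)|-k})$ independent choices for the free vertices since every part of $\U$ has size at least $\xi m/2$. One also needs the straightforward generalization of Lemma~\ref{lem:sizes}: if $|E(H_{I,\U}(m))|\ge\ex(m,F)-\eps m^k$ then every part of $\U$ has size at least $(\xi-g(\eps))m$ with $g(\eps)\to0$, because $|E(H_{I,\U}(m))|$ depends only on the part sizes and is maximized strictly in the interior of the simplex.

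With these in hand the dichotomy of Lemma~\ref{lem:est:col:FK_fano} carries over. Put $\delta''=c\delta/8$, $\beta_1=\min\{\epsilon_s(\delta''),\delta''\}>0$ (with $\epsilon_s$ from property~(c)), and choose $\eta$ so that $r\eta$ and $h(r\eta)$ are negligible next to $\beta_1$. If some $\R$ has $\beta(\R)<\beta_1$, then the subhypergraph $\R'$ of edges with at least $t$ list colors is $F$-free with at least $\ex(m,F)-\epsilon_s(\delta'')m^k$ edges, so property~(c) gives a partition $\U$ of $[m]$ with $|B_{I,\U}(\R')|\le\delta''m^k$, which is near-balanced by the generalized Lemma~\ref{lem:sizes}; double counting the copies of $F$ through bad edges of $\R$ against $E(H_{I,\U}(m))\setminus E(\R'')$ (where $\R''$ discards the bad edges of $\R'$) --- any such copy must meet that set, else it is rainbow --- bounds $|B_{I,\U}(\R)|$ by $(\beta(\R)+\delta'')m^k/c\le\delta m^k/4$, and lifting $\U$ to $\W$ with $W_i=\bigcup_{j\in U_i}V_j$ and adding back the $O(r\eta n^k)$ exceptional hyperedges yields $|B_{I,\W}(H)|\le\delta n^k$, as required. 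Otherwise every $\R$ has $\beta(\R)\ge\beta_1$, and then either $\beta(\R)$ is large and the crude bound $e_{<t}(\R)\le\binom{m}{k}$ suffices, or $\beta(\R)\le\epsilon_s(\delta'')$ and the same double counting bounds $e_{<t}(\R)$ by a constant (depending only on $\delta$ and $F$) times $m^k$; in either case $\beta(\R)-(\log_r 6)\,e_{<t}(\R)/m^k\ge\beta_1/2$ once $r$ is large, so summing over the at most $M_0^n2^{rM_0^k}$ pairs $(\V,\R)$ makes the bound of the first paragraph smaller than $r^{\ex(n,F)}$ --- a contradiction, so this case never occurs.

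I expect the second paragraph to be the crux. The delicate points are that the coloring templates $(e_0^J,\psi_J)$ must be available for \emph{every} bad type $J$, which is exactly where the lower bound $|V_i|\ge\xi n$ of property~(b) is indispensable, and that the partition $\U$ returned by the stability property~(c) need not be the extremal one, so one must invoke the generalized Lemma~\ref{lem:sizes} to guarantee it is balanced enough for the sliding argument to deliver $\Omega(m^{|V(F)|-k})$ copies. A secondary point is that property~(c), unlike Theorem~\ref{thm:kee1}, carries no explicit rate, so the quantitative case analysis of Lemma~\ref{lem:est:col:FK_fano} must be relaxed to the coarse dichotomy above, which uses only a positive lower bound $\beta_1$ on $\beta(\R)$ and a crude constant-times-$m^k$ bound on $e_{<t}(\R)$; accordingly the resulting threshold $r_0=r_0(\delta,F)$ is necessarily ineffective, which is consistent with the statement.
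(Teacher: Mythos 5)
Your proposal follows essentially the route the paper itself indicates (the paper gives no proof in-text; it says the result follows from ``a few changes'' to the proof of Lemma~\ref{lem:est:col:FK_fano} and refers to the first author's thesis), and the high-level structure you lay out --- regularity lemma, multicolored cluster hypergraph, the threshold $t=|E(F)|$ on list sizes, the quantity $\beta(\R)$, the coarse two-case dichotomy, and the double-counting of $F$-copies through bad edges --- is the correct adaptation. Your observation that $r_0$ must become ineffective because property~(c) carries no explicit stability rate is also exactly the right thing to notice, and your replacement of the numerical case split $\beta\ge X$ versus $\beta\le X$ by the dichotomy $\beta\ge\epsilon_s$ versus $\beta_1\le\beta<\epsilon_s$ (crude $\binom{m}{k}$ bound versus a constant-times-$m^k$ bound from the double count) is a valid way to compensate.

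One step is under-justified as stated. Your ``straightforward generalization of Lemma~\ref{lem:sizes}'' asserts that if $|E(H_{I,\U}(m))|\ge\ex(m,F)-\eps m^k$ then every part of $\U$ has size at least $(\xi-g(\eps))m$, on the grounds that the part-size polynomial ``is maximized strictly in the interior of the simplex.'' But Definition~\ref{defturanextremal}(b) only guarantees that \emph{some} extremal partition has all parts $\ge\xi m$; it does not assert that the polynomial $p(\alpha_1,\dots,\alpha_\ell)$ giving the edge density of $H_{I,\V}$ has a unique (hence strict) maximizer, nor that all near-maximizers are near-balanced. If $p$ had a degenerate maximum set reaching toward the boundary, the sliding argument in your analogue of Claim~\ref{claim:copies} would not deliver $\Omega(m^{|V(F)|-k})$ copies through every bad edge. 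In the Fano case this is handled by the explicit computation of Lemma~\ref{lem:sizes}; in the general case you would need either to add strict concavity (or uniqueness of the maximizing part-size vector up to permutation) to the hypotheses on $F$, or to derive it. For all the examples cited in the paper (expanded cliques, fans, the generalizations of Brandt--Irwin--Jiang and Norin--Yepremyan) this strictness does hold, so the claim is true in every case of interest, but as written it is not a consequence of the definition alone.

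The rest of the argument is sound. In particular, the templates $(e_0^J,\psi_J)$ are correctly extracted: since $H_{I,\V^\ast}(m)$ is $F$-free and edge-maximal, adding any single bad edge (and every bad type $J$ is realized because all parts of $\V^\ast$ have linear size) forces a copy of $F$ through that edge whose remaining $|E(F)|-1$ edges lie in $H_{I,\V^\ast}(m)$, i.e.\ have type $I$, which is exactly the combinatorial data one transplants to a near-balanced $\U$. The lift of $\U$ to $\W$ via $W_i=\bigcup_{j\in U_i}V_j$ and the final bookkeeping, including the $O(r\eta n^k)$ exceptional hyperedges, match the Fano proof line for line.
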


On the other hand, the part of our proof that uses the stability of Lemma~\ref{lem:est:col:FK_fano} to show that $B_n$ is the single $n$-vertex $(r,F^R)$-extremal graph for $r \geq r_0$ and sufficiently large $n$ (see Claim~\ref{claim:main}) uses ad-hoc arguments that rely heavily on the structure of the Fano plane and on $B_n$. We have not been able to generalize it to other hypergraphs $F$. However, we believe that the following general statement is true.
\begin{conjecture}\label{conj1}
Given an integer $k\geq 2$ and a linear $k$-uniform hypergraph $F$ such that $\ex(n,F)=\Omega(n^k)$, there exists $r_0$ with the following property. For every $r \geq r_0$, there is $n_0$ such that any $n$-vertex $k$-uniform hypergraph $H$, where $n \geq n_0$, satisfies
\begin{equation}\label{eq_conj}
c_{r,(F,P)}(H) \leq r^{\ex(n,F)},
\end{equation}
where $P$ is the rainbow pattern of $F$. Moreover, equality holds in~\eqref{eq_conj} for $n \geq n_0$ if and only if $H$ is $F$-extremal.
\end{conjecture}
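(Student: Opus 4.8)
The plan is to imitate, for a general linear $k$-uniform $F$, the two-step strategy of Alon, Balogh, Keevash and Sudakov that underlies Theorem~\ref{thm:main:fano}, replacing every appeal to features specific to the Fano plane and to $B_n$ by the corresponding statement for $F$ and its extremal hypergraph. Step~(i) is a colored stability result: any $n$-vertex $H$ with $c_{r,(F,P)}(H)\ge r^{\ex(n,F)}$ is structurally close to an $F$-extremal hypergraph. Step~(ii) is an iterative gap-amplification: if $H$ is \emph{not} $F$-extremal but still admits at least $r^{\ex(n,F)+m}$ rainbow-$F$-free $r$-colorings, then deleting a bounded number of vertices yields $H'$ on $n'\ge n-O(1)$ vertices with at least $r^{\ex(n',F)+m+1}$ such colorings; since an $F$-extremal hypergraph on $n'$ vertices has exactly $r^{\ex(n',F)}$ colorings, $H'$ is again non-extremal and the step iterates, eventually producing an $n_1$-vertex hypergraph with more than $r^{\binom{n_1}{k}}$ colorings, a contradiction. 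Running this from any non-extremal $H$ — which, by the trivial bound $r^{e(H)}$ for $F$-free hypergraphs, must contain a copy of $F$ to satisfy $c_{r,(F,P)}(H)\ge r^{\ex(n,F)}$ — simultaneously yields~\eqref{eq_conj} for all $H$ and the characterization of equality.

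Step~(i) needs a Tur\'an-type input: the \emph{well-behaved multipartite structure} of Definition~\ref{defturanextremal}, that is, an $\ell$, an intersection vector $I\in\I_{\ell,k}$ and a constant $\xi>0$ such that every complete $\ell$-partite hypergraph $H_{I,\V}(n)$ is $F$-free, some such hypergraph with all parts of size at least $\xi n$ attains $\ex(n,F)$, and $F$-free hypergraphs with almost $\ex(n,F)$ edges are within $\delta n^k$ bad edges of one. Granting this, Theorem~\ref{lemmamain2} is exactly the colored stability we want: for every $\delta$ there are $r_0(\delta,F)$ and $n_0(r)$ so that any $H$ with at least $r^{\ex(n,F)}$ rainbow-$F$-free $r$-colorings admits a partition $\V=\{V_1,\dots,V_\ell\}$ with $|B_{I,\V}(H)|\le\delta n^k$. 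Its proof is the regularity-plus-counting scheme of Lemma~\ref{lem:est:col:FK_fano}: regularize every color class, form the multicolored cluster hypergraph $\R$, note that a copy of $F$ in $\R$ whose every edge-list has at least $v(F)$ colors embeds, via Lemma~\ref{lem:clusterlemma}, into a rainbow $F$, bound the number of colorings by a product $6^{a}r^{b}$ with $a$ the number of $\R$-edges of short list and $b$ the rest, and run the same dichotomy on the colorful defect $\beta(\R)$, now invoking the Tur\'an stability of $F$ in place of Theorem~\ref{thm:kee1}; the dependence of $r_0$ on $F$ enters only through the constants of that stability.

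Step~(ii) is the generalization of Claim~\ref{claim:main}. Fix $H$, not $F$-extremal, with at least $r^{\ex(n,F)+m}$ rainbow-$F$-free $r$-colorings. If $\delta_1(H)$ is below the minimum degree of the extremal hypergraph, delete a minimum-degree vertex and win at once from $|E(H_{I,\V}(n))|-|E(H_{I,\V}(n-1))|=\delta_1$; otherwise $\delta_1(H)$ is at least the extremal minimum degree, so $|E(H)|\ge\ex(n,F)$. Choose $\V$ minimizing $|B_{I,\V}(H)|$; step~(i) gives $|B_{I,\V}(H)|\le\delta n^k$, and a counting lemma as in Lemma~\ref{lem:sizes} pins the part sizes to the extremal proportions up to $o(n)$. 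Now either every vertex's link behaves as in $H_{I,\V}(n)$ and $|E(H)|=\ex(n,F)$, forcing $H$ to be an $F$-extremal hypergraph, a contradiction; or there is a bounded \emph{defect} — a bad hyperedge $e\in B_{I,\V}(H)$, or a vertex whose link hypergraph inside a combination of parts forbidden by $I$ is too dense. In the defect case one proves a supersaturation statement: the defect together with the skeleton $H_{I,\V}(n)$ spans $\Omega(n^{v(F)-k})$ copies of $F$ in $H+(\text{defect})$ with pairwise edge-disjoint completions, which is where linearity of $F$ is used, via a Tur\'an argument inside the link hypergraphs generalizing the $K_4$ step of Claim~\ref{claim:K_4}; each such copy, being rainbow-forbidden, is either missing an edge of $H$ or carries a repeated color, which caps the number of ``defect-using'' colorings below $r^{\ex(n,F)-1}$; the complementary colorings are so rigid, with few colors outside a bounded ``abundant'' set as in the $\mathcal{C}_2$ analysis, that deleting the $O(1)$ defect vertices discards only about the extremal degree in edges while killing at most $r^{o(n^{k-1})}$ link-colorings, giving the required net factor of $r$.

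The principal obstacle is twofold and both halves are genuinely hard. First, $\ex(n,F)=\Omega(n^k)$ alone is not known to force the well-behaved multipartite structure of Definition~\ref{defturanextremal}: pinning down the Tur\'an number and the stability of an arbitrary dense linear $k$-uniform hypergraph is wide open, so step~(i) as packaged in Theorem~\ref{lemmamain2} is currently available only for families where this Tur\'an input is known — expanded complete graphs, fan hypergraphs and their common generalizations. Second, even granting that structure, the amplification of step~(ii) requires a \emph{uniform} supersaturation-and-edge-disjointness statement, namely that every hyperedge outside $H_{I,\V}(n)$ lies in $\Omega(n^{v(F)-k})$ copies of $F$ in $H_{I,\V}(n)+e$ with edge-disjoint completions, together with a matching rigidity statement for near-optimal colorings, neither of which the present paper establishes outside the Fano case; the ad hoc ingredients there — explicit bounds such as $Q=18r^3$, the appeal to Tur\'an's theorem for $K_4$ in a link graph, and the fact that one wrong link edge plus a $K_4$ already forms a Fano plane — rely on the small, highly symmetric structure of $\Fano$ and $B_n$. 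Making both ingredients uniform in $F$, with $r_0$ controlled in terms of $v(F)$ and the Tur\'an-stability constants, is where the real work lies.
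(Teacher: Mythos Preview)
The statement you are addressing is labeled a \emph{conjecture} in the paper, and the paper does not prove it. Immediately before stating it, the authors write that the second step of their method ``uses ad-hoc arguments that rely heavily on the structure of the Fano plane and on $B_n$'' and that they ``have not been able to generalize it to other hypergraphs $F$.'' There is therefore no proof in the paper against which to compare your attempt.

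Your proposal is not a proof either, and you are candid about this: you correctly isolate the two places where the argument fails to go through in general, and your diagnosis matches the paper's own. For step~(i), the paper does supply Theorem~\ref{lemmamain2}, but only under the \emph{additional} hypothesis that $F$ has a well-behaved multipartite extremal hypergraph in the sense of Definition~\ref{defturanextremal}; you rightly note that the bare assumption $\ex(n,F)=\Omega(n^k)$ is not known to imply this structure, so even the stability half of the conjecture is open in full generality. For step~(ii), you identify the same obstruction the authors flag: the gap-amplification of Claim~\ref{claim:main} exploits Fano-specific facts---the explicit construction of many edge-disjoint copies via a $K_4$ in a link graph, the bound $Q=18r^3$, the $\mathcal{C}_1/\mathcal{C}_2$ dichotomy based on abundant colors---and no uniform-in-$F$ supersaturation or rigidity statement is available to replace them. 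So your write-up is an accurate account of \emph{why} Conjecture~\ref{conj1} is open, not a proof of it.

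One small technical slip: in your step~(i) you say a rainbow copy embeds once every edge-list in the cluster hypergraph has size at least $v(F)$. The correct threshold is $|E(F)|$, since the greedy assignment needs one fresh color per hyperedge of $F$; for the Fano plane both quantities equal $7$, which is presumably why the conflation arose. Likewise, the base $6$ in your ``$6^a r^b$'' bound is really $|E(F)|-1$ in general.
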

We should point out that we do not expect the statement of Conjecture~\ref{conj1} to hold for $k$-uniform hypergraphs $F$ whose $F$-extremal hypergraph is sparse (see Conjecture~\ref{conj2} below). Moreover, as done in~\cite[Remark 4.1]{rainbow_complete}, one may show that there exist $k$-uniform hypergraphs $F$ such that the statement of Conjecture~\ref{conj1} does not hold for any non-rainbow pattern of $F$. 

For more general patterns, we also deem the following to be true, strengthening~\cite[Theorem 1.1]{rainbow_complete}. Here, we let $K_n^{(k)}$ denote the complete $k$-uniform hypergraph on $n$ vertices.
\begin{conjecture}\label{conj2}
Fix integers $r,k\geq 2$ and a linear $k$-uniform hypergraph $F$ such that $\ex(n,F)=o(n^k)$. Let $P$ be any pattern of $F$ on $t \geq 3$ classes. Then there exists $n_0$ such that 
 any $n$-vertex $k$-uniform hypergraph $H$, where $n \geq n_0$, satisfies
\begin{equation}\label{eq_conj2}
c_{r,(F,P)}(H) \leq c_{r,(F,P)}(K_n^{(k)}).
\end{equation}
 Moreover, equality holds in~\eqref{eq_conj2} for $n \geq n_0$ if and only if $H=K_n^{(k)}$.
\end{conjecture}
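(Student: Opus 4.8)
Here is the line of attack I would pursue toward this conjecture. Note first that the stability results of this paper, Lemma~\ref{lem:est:col:FK_fano} and Theorem~\ref{lemmamain2}, are tailored to hypergraphs $F$ with a dense extremal configuration and are of no help here, where the extremal hypergraph is conjectured to be the complete hypergraph $K_n^{(k)}$ itself. Since $K_n^{(k)}$ admits \emph{every} $r$-coloring that uses at most $t-1$ colors, and no coloring using fewer than $t$ colors can realise a pattern on $t$ classes, one has the trivial bound $c_{r,(F,P)}(K_n^{(k)})\geq(t-1)^{\binom{n}{k}}$, and one expects in fact $c_{r,(F,P)}(K_n^{(k)})=(t-1)^{\binom{n}{k}}\cdot 2^{o(n^k)}$, the genuinely new colorings being confined to a sparse set of hyperedges. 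The crux is therefore an upper bound for an arbitrary $H$, and the natural first ingredient is a \emph{coloring stability} statement: for every $\zeta>0$ there are $\varepsilon>0$ and $n_0$ such that for all $n\geq n_0$, every $(F,P)$-free $r$-coloring $\Delta$ of any $n$-vertex $k$-uniform hypergraph $H$ uses, outside a set of at most $\zeta n^k$ hyperedges, colors only from some $(t-1)$-element set $S=S(\Delta)\subseteq[r]$. I would establish this either via the colored hypergraph container method (see~\cite{BMS2015,ST2015,multicoloredcontainers}) or, in the spirit of the rest of this paper, via the weak hypergraph regularity lemma together with the Multicolored Embedding Lemma~\ref{lem:clusterlemma}: if $t$ pairwise distinct colors each appeared $\varepsilon$-regularly densely on the $k$-tuples of the reduced hypergraph along which a copy of the linear hypergraph $F$ would sit, one could embed a copy of $F$ realising the $t$-class pattern $P$, a contradiction. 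Coloring stability then gives $c_{r,(F,P)}(H)\leq\binom{r}{t-1}\binom{\binom{n}{k}}{\zeta n^k}r^{\zeta n^k}(t-1)^{|E(H)|}$, which by~\eqref{eq:entropy1} is $(t-1)^{|E(H)|}\cdot 2^{o(n^k)}$ once $\zeta$ is small. This already lies below $(t-1)^{\binom{n}{k}}\leq c_{r,(F,P)}(K_n^{(k)})$ whenever $H$ omits $\Omega(n^k)$ hyperedges, and it disposes of every $F$-free $H$ as well, for which $|E(H)|\leq\ex(n,F)=o(n^k)$ forces $c_{r,(F,P)}(H)\leq r^{o(n^k)}$.

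It remains to treat an $H$ omitting at least one, but only $o(n^k)$, hyperedges. Then $H$ has at least $\ex(n,F)+\Omega(n^k)$ hyperedges, so supersaturation gives $\Omega(n^{v(F)})$ copies of $F$ in $H$, with every hyperedge lying in $\Omega(n^{v(F)-k})$ of them. Here I would argue, in the style of Claim~\ref{claim:main}, that filling in a missing hyperedge $e_0$ can only increase the number of $(F,P)$-free colorings, and \emph{strictly} so. Writing $c_{r,(F,P)}(H+e_0)=\sum_{\Delta}N_{e_0}(\Delta)$, where the sum runs over $(F,P)$-free colorings $\Delta$ of $H$ and $N_{e_0}(\Delta)$ is the number of colors for $e_0$ that keep the coloring $(F,P)$-free, each of the at least $(t-1)^{|E(H)|}$ colorings $\Delta$ that use only colors from a fixed $(t-1)$-set contributes $N_{e_0}(\Delta)\geq t-1\geq 2$, whereas a coloring with $N_{e_0}(\Delta)=0$ must create, for every choice of color of $e_0$, a pattern-$P$ copy of $F$ through $e_0$; the latter is a constraint confined to the $O(n^{v(F)-k})$ copies of $F$ meeting $e_0$, and after intersecting with coloring stability such colorings ought to form a negligible fraction. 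If this is pushed through, then $c_{r,(F,P)}(H)<c_{r,(F,P)}(H+e_0)\leq\cdots\leq c_{r,(F,P)}(K_n^{(k)})$, which yields both the inequality and the uniqueness. The hypothesis $t\geq3$ is used precisely here: it leaves $t-1\geq2$ \emph{safe} colors for the extension step, and for $t=2$ the complete hypergraph need not be extremal at all.

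The main obstacle is making this last step quantitatively honest. The slack $2^{o(n^k)}$ produced by coloring stability --- and likewise by any container argument --- dwarfs the factor $t-1$ gained per missing hyperedge, so the crude bounds above do not by themselves separate $c_{r,(F,P)}(H)$ from $c_{r,(F,P)}(K_n^{(k)})$ when only $o(n^k)$ hyperedges are absent. Closing this gap seems to demand either a much sharper stability result --- showing that an $(F,P)$-free coloring of a nearly complete hypergraph in fact uses at most $t-1$ colors outside an exceptional set of size only $O(n^{k-1})$, or even $O(\log n)$ --- or a direct structural determination of the $(F,P)$-free colorings of $K_n^{(k)}$ precise enough to pin down the lower-order term; this is why the statement appears here only as a conjecture. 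A subsidiary technical difficulty is the colored embedding step itself: to rule out a $t$-colored copy of $F$ one must control not merely that $t$ colors are each used often, but that each is $\varepsilon$-regularly dense on the particular $k$-tuples of the reduced hypergraph along which $F$ would be embedded, which requires careful bookkeeping with the colored regularity partition.
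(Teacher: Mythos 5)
The paper supplies no proof of this statement: it is presented only as Conjecture~\ref{conj2}, stated as something the authors ``deem to be true'' and as a strengthening of a known graph-case result (\cite[Theorem 1.1]{rainbow_complete}). So there is nothing in the paper to compare your write-up against, and you correctly recognize this. Your sketch is a reasonable roadmap and, more importantly, it correctly locates the central obstacle. Coloring stability (via colored regularity and Lemma~\ref{lem:clusterlemma}, or via multicolored containers) does plausibly give that outside $o(n^k)$ hyperedges every $(F,P)$-free coloring uses colors from a $(t-1)$-set, and with the entropy bound~\eqref{eq:entropy1} this disposes of $F$-free $H$ and of any $H$ missing $\Omega(n^k)$ hyperedges. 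The hard case is $H$ missing only $o(n^k)$ but at least one hyperedge, and you are right that the $2^{o(n^k)}$ slack in the exponent from stability overwhelms the per-hyperedge factor of $t-1$ gained by filling in a missing edge, so the machinery used in the dense setting (Lemma~\ref{lem:est:col:FK_fano} plus the iteration of Claim~\ref{claim:main}) does not transfer. Your observation about $t\geq 3$ is also correct: a coloring of $E(F)$ that uses fewer than $t$ colors induces a partition with fewer than $t$ classes and hence cannot be isomorphic to $P$, giving $c_{r,(F,P)}(K_n^{(k)}) \geq (t-1)^{\binom{n}{k}}$; for $t=2$ this collapses to $r$ monochromatic colorings, so the complete hypergraph is not extremal there.

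One suggestion for pushing further, if you want to attempt this: rather than trying to sharpen coloring stability globally to $O(n^{k-1})$ or smaller exceptional sets (which seems unlikely to hold), it may be more fruitful to examine how the proof of the graph rainbow case in \cite{rainbow_complete} handles the nearly-complete regime, since the authors explicitly flag this conjecture as a strengthening of that result. A second point to tighten is the colored embedding step: ruling out a pattern-$P$ copy requires not just that $t$ colors are $\eta$-dense somewhere, but that the color assignment on the specific $k$-tuples of the reduced hypergraph along which $F$ embeds is compatible with $P$ as a colored sub-hypergraph in the sense of Lemma~\ref{lem:clusterlemma}; your sketch gestures at this but the bookkeeping over which colors sit on which reduced hyperedges is exactly where the $t$-class pattern (as opposed to the rainbow pattern, where any $7$ distinct colors suffice) introduces genuine extra work.
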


\end{document}